\def\rr{{\mathbb R}}
\def\rn{{{\rr}^n}}
\def\zz{{\mathbb Z}}
\def\nn{{\mathbb N}}
\def\fz{\infty}
\def\aa{{\mathbb A }}
\def\cs{{\mathcal S}}
\def\cb{{\mathcal B}}
\def\az{\alpha}
\renewcommand\tilde{\widetilde}
\def\esup{\mathop\mathrm{\,ess\,sup\,}}
\def\ls{\lesssim}
\def\lz{\lambda}
\def\Lz{\Lambda}
\def\bz{\beta}
\def\vz{\varphi}
\def\Oz{{\Omega}}
\def\hs{\hspace{0.3cm}}
\def\r{\right}
\def\lf{\left}
\def\bint{{\ifinner\rlap{\bf\kern.30em--}
\int\else\rlap{\bf\kern.35em--}\int\fi}\ignorespaces}
\def\sbint{{\ifinner\rlap{\bf\kern.32em--}
\hspace{0.078cm}\int\else\rlap{\bf\kern.45em--}\int\fi}\ignorespaces}
\def\dsup{\displaystyle\sup}
\newtheorem{theorem}{Theorem}[section]
\newtheorem{lemma}[theorem]{Lemma}
\theoremstyle{definition}
\newtheorem{remark}[theorem]{Remark}
\newtheorem{definition}[theorem]{Definition}
\numberwithin{equation}{section}
\numberwithin{equation}{section}
\numberwithin{equation}{section}
\begin{document}

\arraycolsep=1pt

\title{\Large\bf Weighted Norm Inequalities for Parametric Littlewood-Paley Operators
\footnotetext{\hspace{-0.35cm}
{\it 2010 Mathematics Subject Classification}.
{Primary 42B25; Secondary 42B30, 46E30.}
\endgraf{\it Key words and phrases.} Littlewood-Paley operator, Hardy space, Muckenhoupt weight,
Musielak-Orlicz function.
}}
\author{ Li Bo\\
}
\date{  }
\maketitle

\vspace{-0.8cm}

\begin{minipage}{13cm}\small
{
\noindent
{\bf Abstract:}
In this paper,
the author establishes the boundedness of parametric Littlewood-Paley operators
from Musielak-Orlicz Hardy space to Musielak-Orlicz space, or to weak Musielak-Orlicz space at the critical index.
Part of these results are new even for classical Hardy space of Fefferman and Stein.
}
\end{minipage}



\section{Introduction\label{s1}}
The impact of the theory of Hardy space $H^p(\rn)$ with $p\in(0,\,1]$ in the last forty years has been significant.
Hardy space first appeared in the work of Hardy \cite{h14} in 1914. Its study was
based on complex methods and its theory was one-dimensional.
The higher dimensional Euclidean theory of Hardy space was developed by Fefferman
and Stein \cite{fs71} who proved a variety of characterizations for them.
Later, the advent of its atomic or molecular characterizations
enabled the extension of $H^p(\rn)$ to far more general settings such as space
of homogeneous type in the sense of Coifman and Weiss \cite{cw77}.
It is well known that, when $p\in(0,\,1]$, Hardy space $H^p(\mathbb{R}^n)$ is a good substitate of the Lebesgue space $L^p(\mathbb{R}^n)$
in the study for the boundedness of operators. For example, when $p\in(0,\,1]$,
the Riesz transforms are not bounded on $L^p(\mathbb{R}^n)$, however, they are bounded on $H^p(\mathbb{R}^n)$.

Recently, Ky \cite{k14} introduced a new Musielak-Orlicz Hardy space $H^\vz(\rn)$,
which unifies the classical Hardy space, the weighted Hardy space, the Orlicz Hardy space, and
the weighted Orlicz Hardy space, in which the spatial and the time variables may not be separable (see, for example, \cite{fs72,st89,jy10,lhy12}).
Apart from interesting theoretical considerations, the motivation to study
$H^\vz(\rn)$ comes from applications to elasticity,
fluid dynamics, image processing, nonlinear PDEs and the calculus of variation (see, for example, \cite{d05,dhr09}).
More Musielak-Orlicz-type spaces are referred to
\cite{bckyy13,ccyy17,lfy15,lffy16,lsl16,ly13,lyj16,ylk17,yyz14,zql17}.

On the other hand, various fields of analysis and differential equations require the theory of various function
space, for examples, Lebesgue space, Hardy space, various forms of Lipschitz space, BMO space and Sobolev space.
From the original definitions of these spaces, it may not appear that they are very closely related.
There exist, however, various unified approaches to their study.
The Littlewood-Paley theory, which arises naturally from the consideration of the Dirichlet problem, provides one of the
most successful unifying perspectives on these function spaces (see \cite{fjw91} for more details).
And, it remains closely related to the theory of Fourier multipliers (see \cite[Chapter 5]{g09c}).

Suppose that $S^{n-1}$ is the unit sphere in the $n$-dimensional Euclidean space $\rn \ (n\ge2)$.
Let $\Omega$ be a {homogeneous function of degree zero} on $\rn$ which is locally integrable and satisfies the cancellation condition
\begin{align}\label{e1.1}
\int_{S^{n-1}}\Omega(x')\,d\sigma(x')=0,
\end{align}
where $d\sigma$ is the Lebesgue measure and $x':=x/{|x|}$ for any $x\neq{\mathbf{0}}$.
For a function $f$ on $\rn$, the parametric Littlewood-Paley operators
$\mu^\rho_{\Omega,\,S}$ and $\mu^{\rho,\,\ast}_{\Omega,\,\lz}$ are, respectively,
defined by setting, for any $x\in\rn$,
$$\mu^\rho_{\Omega,\,S}(f)(x)£º=\lf(\int\int_{|y-x|<t}\lf|
\int_{|y-z|<t}\frac{\Omega(y-z)}{|y-z|^{n-\rho}}f(z)\,dz\r|^2\,\frac{dydt}{t^{n+2\rho+1}}\r)^{\frac12}$$
and
$$\mu^{\rho,\,\ast}_{\Omega,\,\lz}(f)(x)
=\lf[\int\int_{{\mathbb{R}}^{n+1}_+}\lf(\frac{t}{t+|x-y|}\r)^{\lz n}\lf|
\int_{|y-z|<t}\frac{\Omega(y-z)}{|y-z|^{n-\rho}}f(z)\,dz\r|^2\,\frac{dydt}{t^{n+2\rho+1}}\r]^{\frac12},$$
where $\rho\in(0,\,\infty)$ and $\lz\in(1,\,\fz)$. The $\mu^\rho_{\Omega,\,S}$ and $\mu^{\rho,\,\ast}_{\Omega,\,\lz}$
were first studied by Sakamoto and Yabuta \cite{sy99} in 1999. They showed that
if $\Omega\in{\rm{Lip}}_\alpha(S^{n-1})$ with $\alpha\in(0,\,1]$,
then $\mu^\rho_{\Omega,\,S}$ and $\mu^{\rho,\,\ast}_{\Omega,\,\lz}$ are bounded on $L^p(\rn)$ with $p\in(1,\,\fz)$.
In 2009, Xue and Ding \cite{xd07} obtained a celebrated result that
$\mu^\rho_{\Omega,\,S}$ and $\mu^{\rho,\,\ast}_{\Omega,\,\lz}$ are bounded on $L^p_\omega(\rn)$ with $p\in(1,\,\fz)$
under weaker smoothness condition of $\Omega$,
where $\omega\in A_p$ and $A_p$ denotes the Muckenhoupt weight class.
As for their Hardy space boundedness, Ding et al. \cite{dlx07a,dlx07lp}
showed that, if $\Omega$ satisfies some weaker smoothness condition,
then $\mu^\rho_{\Omega,\,S}$ and $\mu^{\rho,\,\ast}_{\Omega,\,\lz}$ are bounded from $H^1(\rn)$ to $L^1(\rn)$.
More conclusions of Littlewood-Paley operators are referred to \cite{agd14,cxy15,hxmy15,l17,lw14,lwz15,lfy15,lll17,sxy14,xyy15}.

Motivated by all of the above mentioned facts, a natural and interesting problem arises, that is to say,
whether $\mu^\rho_{\Omega,\,S}$ and $\mu^{\rho,\,\ast}_{\Omega,\,\lz}$ are bounded
from Musielak-Orlicz Hardy space $H^\vz(\rn)$ to Musielak-Orlicz space $L^\vz(\rn)$.
In this paper we shall answer this problem affirmatively. Not only that,
we also discuss boundedness of $\mu^\rho_{\Omega,\,S}$ and $\mu^{\rho,\,\ast}_{\Omega,\,\lz}$
from Musielak-Orlicz Hardy space $H^\vz(\rn)$ to weak Musielak-Orlicz space $WL^\vz(\rn)$ at the critical index.

The present paper is built up as follows.
In Section \ref{s2}, we recall some notions concerning Muckenhoupt weights,
growth functions and Musielak-Orlicz Hardy space $H^\vz(\rn)$.
Then we statement the boundedness of $\mu^\rho_{\Omega,\,S}$ and $\mu^{\rho,\,\ast}_{\Omega,\,\lz}$
from $H^\vz(\rn)$ to $L^\vz(\rn)$ or to $WL^\vz(\rn)$
(see Theorems \ref{dingli.1}-\ref{dingli.4} below),
the proofs of which are given in Sections \ref{s3} and \ref{s4}.
In the process of the proofs of main results, a boundedness criterion of operators
from $H^\vz(\rn)$ to $L^\vz(\rn)$ (see \cite[Lemma 3.12]{lll17}) plays an indispensable role.
Motivated by this, a boundedness criterion of operators
from $H^\vz(\rn)$ to $WL^\vz(\rn)$ (see Lemma \ref{yt2} below) is also established.

Finally, we make some conventions on notation.
Let $\zz_+:=\{1,\, 2,\,\ldots\}$ and $\nn:=\{0\}\cup\zz_+$.
For any $\bz:=(\bz_1,\ldots,\bz_n)\in\nn^n$,
let $|\bz|:=\bz_1+\cdots+\bz_n$.
Throughout this paper the letter $C$ will denote a \emph{positive constant} that may vary
from line to line but will remain independent of the main variables.
The \emph{symbol} $P\ls Q$ stands for the inequality $P\le CQ$. If $P\ls Q\ls P$, we then write $P\sim Q$.
For any sets $E,\,F \subset \rn$, we use $E^\complement$ to denote the set $\rn\setminus E$,
$|E|$ its  {\it{$n$-dimensional Lebesgue measure}},
$\chi_E$ its \emph{characteristic function} and
$E+F$ the {\it algebraic sum} $\{x+y:\ x\in E,\,y\in F\}$.
For any $s\in\rr$, $\lfloor s\rfloor$ denotes the
unique integer such that $s-1<\lfloor s\rfloor\le s$.
If there are no special instructions, any space $\mathcal{X}(\rn)$ is denoted simply by $\mathcal{X}$. For instance, $L^2(\rn)$ is simply denoted by $L^2$.
For any index $q\in[1,\,\fz]$, $q'$ denotes the {\it{conjugate index}} of $q$, namely, $1/q+1/{q'}=1$.
For any set $E$ of $\rn$, $t\in[0,\,\infty)$ and measurable function $\vz$,
let $\vz(E,\,t):=\int_E\vz(x,\,t)\,dx$ and $\{|f|>t\}:=\{x\in\rn: \ |f(x)|>t\}$.
As usual, for any $x\in\rn$, $r\in(0,\,\infty)$ and $\alpha\in(0,\,\infty)$, let $B(x,\,r):=\{y\in\rn: |x-y|<r\}$ and $\alpha B(x,\,r):=B(x,\,\alpha r)$.




\section{Notions and main results}\label{s2}
In this section, we first recall the notion concerning the Musielak-Orlicz Hardy space $H^\vz$
via the non-tangential grand maximal function, and then present
the boundedness of parametric Littlewood-Paley operators
from Musielak-Orlicz Hardy space to Musielak-Orlicz space, or to weak Musielak-Orlicz space at the critical index..

Recall that a nonnegative function $\vz$ on $\rn\times[0,\,\fz)$ is called a {\it Musielak-Orlicz function} if,
for any $x\in\rn$, $\vz(x,\,\cdot)$ is an Orlicz function on $[0,\,\fz)$ and, for any $t\in[0,\,\fz)$,  $\vz(\cdot\,,t)$ is measurable on $\rn$.
Here a function $\phi: [0,\,\fz) \to [0,\,\fz)$ is called an {\it Orlicz function},
if it is nondecreasing, $\phi(0) = 0$, $\phi(t) > 0$
for any $t\in(0,\,\fz)$, and $\lim_{t\to\fz} \phi(t) = \fz$.


Given a Musielak-Orlicz function $\vz$ on $\rn\times[0,\,\fz)$,
$\vz$ is said to be of {\it{uniformly lower}} (resp. {\it{upper}}) {\it{type}} $p$ with $p\in \mathbb{R}$,
if there exists a positive constant $C:=C_{\vz}$ such that, for any $x\in\rn$, $t\in[0,\,\fz)$ and $s\in(0,\,1]$
(resp. $s\in[1,\,\fz)$),
\begin{eqnarray*}
\vz(x,\,st)\le C s^p\vz(x,\,t).
\end{eqnarray*}
The {\it critical uniformly lower type index}
and the {\it critical uniformly upper type index}
of $\vz$
are, respectively,
defined by
\begin{align}\label{e2.1}
i(\vz):=\sup\{ p\in\mathbb{R}: \ \vz \mathrm{ \ is \ of \ uniformly\  lower\  type \ {\it p}} \},
\end{align}
and
\begin{align}\label{e2.1.1}
I(\vz):=\inf\{p\in\mathbb{R}: \ \vz \mathrm{ \ is \ of \ uniformly\  upper\  type \ {\it p}} \}.
\end{align}
Observe that $i(\vz)$ or $I(\vz)$ may not be attainable,
namely, $\vz$ may not be of uniformly lower type $i(\vz)$ or
of uniformly upper type $I(\vz)$ (see \cite[p.\,415]{lhy12} for more details).

\begin{definition}\label{d2.2}
Let $q\in[1,\,\fz)$. A locally integrable function $\vz(\cdot\,,t): \rn \to [0,\,\fz)$ is said to satisfy
the {\it uniformly Muckenhoupt condition} $\aa_q$,
denoted by $\vz\in \aa_q$, if there exists a positive constant $C$ such that,
for any ball $B\subset\rn$ and $t\in(0,\,\fz)$, when $q=1$,
$$\frac{1}{|B|}\int_{B} \vz(x,\,t)\,dx\lf\{\esup_{y\in B} [\vz(y,\,t)]^{-1}\r\}\le C$$
and, when $q\in(1,\fz)$,
$$\frac{1}{|B|}\int_{B}\vz(x,\,t)\,dx
\lf\{\frac{1}{|B|}\int_{B}[\vz(y,\,t)]^{-\frac{1}{q-1}}\,dy\r\}^{q-1}
\le C.$$

\end{definition}

For $\vz\in\aa_q$ with $q\in[1,\,\fz)$, we have the following properties as the classical Muckenhoupt weight.
\begin{lemma}\label{bcd}{\rm{\cite[Lemma 4.5]{k14}}}
Let $\vz\in\mathbb{A}_q$ with $q\in[1,\,\fz)$. Then the following statements hold true:
\begin{enumerate}
\item[\rm{(i)}]
there exists a positive constant $C$ such that,
for any ball $B\subset\rn$, $\lambda\in(1,\,\fz)$ and $t\in(0,\,\fz)$,
$$\vz(\lambda B,\,t)\le C{\lambda}^{nq}\vz(B,\,t).$$

\item[\rm{(ii)}]
if $q\neq1$, there exists a positive constant $C$ such that,
for any ball $B(x_0,\,r)\subset\rn$ and $t\in(0,\,\fz)$,
$$\int_{B^\complement}\frac{\vz(x,\,t)}{|x-x_0|^{nq}}\,dx\le C\frac{\vz(B(x_0,\,r),\,t)}{r^{nq}}.$$
\end{enumerate}
\end{lemma}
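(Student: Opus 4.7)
The plan is to freeze the parameter $t\in(0,\fz)$ and mimic the classical arguments for Muckenhoupt weights, using that the $\mathbb{A}_q$ condition holds uniformly in $t$. Both statements are essentially $t$-wise copies of well-known facts for the classical class $A_q$.

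For part (i) with $q\in(1,\fz)$, I would apply Hölder's inequality (with exponents $q$ and $q/(q-1)$) to the identity $|B|=\int_B [\vz(x,t)]^{1/q}[\vz(x,t)]^{-1/q}\,dx$, obtaining
\begin{equation*}
|B|^q \le \vz(B,t)\,\lf(\int_B [\vz(x,t)]^{-1/(q-1)}\,dx\r)^{q-1}.
\end{equation*}
The $\mathbb{A}_q$ hypothesis applied to $\lambda B$ rearranges to
\begin{equation*}
\vz(\lambda B,t) \le C |\lambda B|^q \lf(\int_{\lambda B} [\vz(y,t)]^{-1/(q-1)}\,dy\r)^{-(q-1)}.
\end{equation*}
Since $B\subset\lambda B$, shrinking the domain of the integral from $\lambda B$ down to $B$ only increases the inverted factor; combining with the previous display yields $\vz(\lambda B,t) \le C\lambda^{nq}\vz(B,t)$. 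The case $q=1$ reduces to the equivalent form $\vz(B,t)\le C|B|\,\mathrm{ess\,inf}_{y\in B}\vz(y,t)$ of the $\mathbb{A}_1$ inequality: applying this on $\lambda B$ and using the monotonicity $\mathrm{ess\,inf}_{y\in\lambda B}\vz(y,t)\le\mathrm{ess\,inf}_{y\in B}\vz(y,t)$ gives $\vz(\lambda B,t)\le C(|\lambda B|/|B|)\vz(B,t)=C\lambda^n\vz(B,t)$.

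For part (ii) I would dyadically decompose $B^\complement=\bigcup_{k\ge 0}(2^{k+1}B\setminus 2^k B)$. On the $k$-th annulus $|x-x_0|\ge 2^k r$, so the left-hand side is bounded by $\sum_{k\ge 0}\vz(2^{k+1}B,t)/(2^k r)^{nq}$. A direct application of (i) produces $\vz(2^{k+1}B,t)\le C 2^{(k+1)nq}\vz(B,t)$, which bounds each summand uniformly in $k$ but fails to sum. Closing the geometric series therefore demands improving the doubling exponent strictly below $nq$; the natural tool is the self-improvement (``open'') property of the uniform Muckenhoupt class, namely that $\vz\in\mathbb{A}_q$ implies $\vz\in\mathbb{A}_{q-\varepsilon}$ for some $\varepsilon>0$, with constants uniform in $t$. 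This is the Musielak-Orlicz analogue of the classical reverse Hölder inequality for Muckenhoupt weights, and it is the genuine technical step I expect to be the main obstacle; once it is in hand, it upgrades (i) to $\vz(\lambda B,t)\le C\lambda^{n(q-\varepsilon)}\vz(B,t)$, making $\sum_k 2^{-kn\varepsilon}$ converge and producing the claimed bound $C\vz(B(x_0,r),t)/r^{nq}$.
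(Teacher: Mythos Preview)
The paper does not supply its own proof of this lemma; it simply quotes it from \cite[Lemma 4.5]{k14}. So there is nothing in the paper to compare your argument against beyond noting that the result is treated as known background.

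Your argument itself is correct and follows the standard classical route for $A_q$ weights, applied uniformly in $t$. Part (i) is exactly the usual H\"older-plus-$A_q$ trick. For part (ii), you are right that the naive dyadic decomposition combined with (i) gives a divergent sum, and that the fix is the self-improvement $\vz\in\mathbb{A}_q\Rightarrow\vz\in\mathbb{A}_{q-\varepsilon}$. The paper explicitly records this open-ended property of the uniform classes $\mathbb{A}_q$ (immediately after the lemma statement), so invoking it is legitimate here and your argument closes.
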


Define $\aa_\fz:=\bigcup_{q\in[1,\,\fz)} \aa_q$.
It is well-known that if $\vz\in \aa_q$ with $q\in[1,\,\fz]$,
then $\vz^\varepsilon\in \aa_q$ for any $\varepsilon\in(0,\,1]$
and $\vz^\eta\in \aa_q$ for some $\eta\in(1,\,\fz)$.
Also, if $\vz\in \aa_q$ with $q\in(1,\,\fz)$,
then $\vz\in \aa_r$ for any $r\in(q,\,\fz)$
and $\vz\in \aa_d$ for some $d\in(1,\,q)$.
Thus,
the {\it{critical weight index}} of $\vz\in\aa_\fz$ is defined as follows:
\begin{eqnarray}\label{e2.4}
q(\vz):=\inf\{q\in[1,\,\fz):\ \vz\in\aa_q\}.
\end{eqnarray}
Observe that, if $q(\vz)\in(1,\,\fz)$, then $\vz\notin\aa_{q(\vz)}$, and there exists $\vz\notin\aa_1$ such that $q(\vz)=1$ (see \cite{jn87}  for more details).

\begin{definition}\label{d2.3}{\rm\cite[Definition 2.1]{k14}}
A function $\vz: \rn\times[0,\,\fz) \to [0,\,\fz)$ is  called a {\it{growth function}}
if the following conditions are satisfied:
\begin{enumerate}
\item[\rm{(i)}] $\vz$ is a {Musielak-Orlicz function};
%
%
\item[\rm{(ii)}] $\vz\in\aa_\fz$;

\item[\rm{(iii)}] $\vz$ is of uniformly lower type $p$ for some $p\in(0,\,1]$ and of uniformly upper type $1$.
\end{enumerate}
\end{definition}

Throughout the paper, we always assume that $\varphi$ is a growth function.

Recall that the
\emph{Musielak-Orlicz space} $L^{\vz}$ is defined to be the space of all measurable functions $f$ such that,
for some $\eta\in(0,\,\fz)$,
$$\int_\rn \vz\lf(x,\, \frac{|f(x)|}{\eta}\r)\, dx<\fz$$ equipped with the (quasi-)norm
$$\|f\|_{L^\vz}:=\inf\lf\{ \eta\in(0,\,\fz):\ \int_\rn \vz\lf(x,\, \frac{|f(x)|}{\eta}\r)\, dx\le 1\r\}.$$

Similarly, the \emph{weak Musielak-Orlicz space} $WL^{\vz}$ is defined to be the space of all measurable functions $f$ such that, for some $\eta\in(0,\,\fz)$,
$$\sup_{t\in(0,\,\fz)} \vz\lf(\{|f|>t\},\, \frac{t}{\eta}\r)<\fz$$ equipped with the quasi-norm
$$ \|f\|_{WL^\vz}:=\inf\lf\{ \eta\in(0,\,\fz):\ \sup_{t\in(0,\,\fz)}\vz\lf(\{|f|>t\},\,\frac{t}{\eta}\r)\le 1\r\}. $$

In what follows, we denote by $\cs$ the {\it space of all Schwartz functions}
and by $\cs'$ its {\it dual space } (namely, the {\it space of all tempered
distributions}). For any $m\in\nn$, let $\cs_m$ be the {\it{space}} of all $\psi\in\cs$ such that $\|\psi\|_{\cs_m}\le1$, where
\begin{eqnarray*}
\|\psi\|_{\cs_m}:=\sup_{\az\in\nn^n,\,|\az|\le m+1}\sup_{x\in\rn}(1+|x|)^{(m+2)(n+1)}|\partial^\az\psi(x)|.
\end{eqnarray*}
Then, for any $m\in\nn$ and $f\in \cs'$, the {\it{non-tangential grand maximal function}}
$f^\ast_m$ of $f$ is defined by setting, for all $x\in\rn$,
\begin{eqnarray}\label{e2.m1}
f^\ast_m(x) := \sup_{\psi\in \cs_m}\,\sup_{|y-x|<t,\,t\in(0,\,\fz)}
|f\ast\psi_t(y)|,
\end{eqnarray}
where, for any $t\in(0,\,\fz)$, $\psi_t(\cdot):= t^{-n}\psi(\frac{\cdot}{t})$.
When
\begin{align}\label{e2.5}
m=m(\vz) :=\lf\lfloor n\lf(\frac{q(\vz)}{i(\vz)}-1\r)\r\rfloor,
\end{align}
we denote $f^\ast_m$ simply by $f^\ast$,
where $q(\vz)$ and $i(\vz)$ are as in \eqref{e2.4} and \eqref{e2.1}, respectively.

\begin{definition}\label{d2.5} \cite[Definition 2.2]{k14}
Let $\vz$ be a growth function as in Definition \ref{d2.3}.
The \emph{Musielak-Orlicz Hardy space} $H^\vz$ is defined as the space of all $f\in\cs'$
such that $f^\ast\in L^\vz$ endowed with the (quasi-)norm
$$\|f\|_{H^\vz}:=\|f^\ast\|_{L^\vz}.$$
\end{definition}

The main results of this paper are as follows, the proofs of which are given in Sections \ref{s3} and \ref{s4}.

\begin{theorem}\label{dingli.1}
Let $\Omega\in{\rm{Lip}}_\alpha(S^{n-1})$ with $\az\in(0,\,1]$, $\rho\in(n/2,\,\fz)$ and $\bz\in(0,\,\min\{1/2,\,\az,\,\rho-n/2\})$.
Suppose $\vz$ is a growth function as in Definition \ref{d2.3} with $p\in(n/(n+\bz),\,1]$.
If $\vz\in\aa_{p(1+{\bz}/{n})}$,
then there exists a positive constant $C$ independent of $f$ such that
$$\lf\|\mu^\rho_{\Omega,\,S}(f)\r\|_{L^\vz} \leq C\|f\|_{H^\vz}.$$
\end{theorem}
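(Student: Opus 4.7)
The plan is to invoke the boundedness criterion from $H^\vz$ to $L^\vz$ provided by \cite[Lemma 3.12]{lll17}. That criterion reduces the theorem to two ingredients: (a) the uniform (in $t\in(0,\fz)$) $L^q_{\vz(\cdot,t)}$-boundedness of $\mu^\rho_{\Omega,\,S}$ for some $q\in(q(\vz),\,\fz)$; and (b) a uniform off-diagonal estimate for atoms of $H^\vz$, namely, for every atom $a$ associated with a ball $B=B(x_0,r_B)$,
$$\int_{(2B)^\complement}\vz\lf(x,\,\mu^\rho_{\Omega,\,S}(a)(x)\r)\,dx\ls 1$$
(after the usual normalization of $a$). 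First, ingredient (a) follows immediately from the Muckenhoupt-weighted $L^q$-boundedness of Xue and Ding \cite{xd07}: because $\vz\in\aa_{p(1+\bz/n)}$, one may fix $q$ slightly bigger than $q(\vz)$ so that $\vz(\cdot,t)\in A_q$ uniformly in $t$, and the cited theorem applies.

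For ingredient (b), fix $k\in\zz_+$ and $x\in 2^{k+1}B\setminus 2^kB$. Since the hypotheses $p>n/(n+\bz)$ and $\vz\in\aa_{p(1+\bz/n)}$ together force $m(\vz)=0$, the atom $a$ needs only the single cancellation $\int_\rn a(z)\,dz=0$. Using this, I would rewrite
$$\int_{|y-z|<t}\frac{\Omega(y-z)}{|y-z|^{n-\rho}}a(z)\,dz=\int_B\lf[K_t(y,z)-K_t(y,x_0)\r]a(z)\,dz,$$
where $K_t(y,z):=|y-z|^{\rho-n}\Omega(y-z)\chi_{|y-z|<t}$. On the set where the two characteristic functions agree, combining the Lipschitz regularity of $\Omega$ of order $\az$ with the smoothness of $|y-\cdot|^{\rho-n}$ yields the pointwise kernel estimate
$$|K_t(y,z)-K_t(y,x_0)|\ls\frac{|z-x_0|^\bz}{|y-x_0|^{n-\rho+\bz}}$$
for any $\bz\in(0,\,\az]$, while the defect region, on which only one of the two characteristic functions is active, is supported in a thin annulus $|y-x_0|\sim t$ and is handled directly from the size bound for $\Omega$. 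Inserting these estimates into the area integral, applying Minkowski's inequality to pull the $z$-integration outside, and carrying out the $(y,t)$-integration over the cone $|y-x|<t$ produces a pointwise bound of the form
$$\mu^\rho_{\Omega,\,S}(a)(x)\ls\frac{r_B^{n+\bz}}{|x-x_0|^{n+\bz}}\|a\|_{L^1}.$$
Summing $\vz(\cdot,\mu^\rho_{\Omega,\,S}(a))$ over the annuli $\{2^{k+1}B\setminus 2^kB\}_{k\in\zz_+}$ via Lemma \ref{bcd}(ii), invoking the weight property $\vz\in\aa_{p(1+\bz/n)}$ and the uniformly lower type $p\in(n/(n+\bz),\,1]$, then closes (b).

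The principal obstacle lies in the careful bookkeeping of the $t$-integration and the defect region. The restriction $\bz<\rho-n/2$ is precisely what forces convergence of the $t$-integral near zero after the kernel estimate, while $\bz<1/2$ appears when balancing the $L^2$ geometry of the area cone against the Lipschitz order of the kernel difference. Thus $\bz$ must be selected so that the three constraints $\bz\leq\az$, $\bz<\rho-n/2$, and $\bz<1/2$ are compatible with $\vz\in\aa_{p(1+\bz/n)}$; it is this balancing that simultaneously secures the pointwise atomic decay and the summability of the weighted off-diagonal series, and hence completes the proof via the criterion.
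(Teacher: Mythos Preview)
Your approach is essentially the same as the paper's: invoke the criterion from \cite[Lemma 3.12]{lll17}, handle the near-ball contribution via the uniform weighted $L^2$ boundedness of \cite{xd07}, and for the far part establish the pointwise decay $\mu^\rho_{\Omega,\,S}(b)(x)\ls\|b\|_{L^\fz}\,r^{n+\bz}\,|x-x_0|^{-(n+\bz)}$ on $(64B)^\complement$ (this is precisely Lemma~\ref{m2}, proved by the same kernel-difference plus thin-annulus decomposition you sketch), then close with Lemma~\ref{bcd}(ii) and the uniformly lower type $p$ property. One small slip: your displayed pointwise bound should carry $\|a\|_{L^\fz}$ rather than $\|a\|_{L^1}$ to be dimensionally consistent with the prefactor $r_B^{n+\bz}$ (equivalently, $r_B^{\bz}\|a\|_{L^1}$); the argument is otherwise on track.
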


\begin{theorem}\label{dingli.2}
Let $\Omega\in{\rm{Lip}}_\alpha(S^{n-1})$ with $\az\in(0,\,1]$, $\rho\in(n/2,\,\fz)$ and $\bz\in(0,\,\min\{1/2,\,\az,\,\rho-n/2\})$.
Suppose $\vz$ is a growth function as in Definition \ref{d2.3} with $p:=n/(n+\bz)$ and $I(\vz)\in(0,\,1)$,
where $I(\vz)$ is as in \eqref{e2.1.1}. If $\vz\in\aa_1$,
then there exists a positive constant $C$ independent of $f$ such that
$$\lf\|\mu^\rho_{\Omega,\,S}(f)\r\|_{WL^\vz} \leq C\|f\|_{H^\vz}.$$
\end{theorem}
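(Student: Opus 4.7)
The plan is to invoke the weak-type boundedness criterion Lemma \ref{yt2} (the $WL^\vz$-analogue of the $L^\vz$-criterion \cite[Lemma 3.12]{lll17} used for Theorem \ref{dingli.1}). Such a criterion should reduce the assertion to verifying, for every $(\vz,\,q,\,s)$-atom $a$ supported in a ball $B := B(x_0,\,r_0)$, both a uniform estimate on $\|\mu^\rho_{\Omega,\,S}(a)\chi_{2B}\|_{L^q}$ (via an $L^q$-mapping of $\mu^\rho_{\Omega,\,S}$) and a pointwise off-diagonal decay of $\mu^\rho_{\Omega,\,S}(a)$ on $(2B)^\complement$.

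The local piece follows from the $L^2$-boundedness of $\mu^\rho_{\Omega,\,S}$ (due to \cite{sy99,xd07}), the $L^2$-normalisation of the atom, and the doubling estimate in Lemma \ref{bcd}(i). The off-diagonal estimate is where the real work lies. Using the vanishing moment $\int_\rn a(z)\,dz=0$, for every $(y,\,t)$ with $|y-x|<t$ and $x\notin 2B$ I would rewrite
$$\int_{|y-z|<t}\frac{\Omega(y-z)}{|y-z|^{n-\rho}}a(z)\,dz = \int_B\lf[K_{t}(y-z)-K_{t}(y-x_0)\r]a(z)\,dz,$$
with $K_t(u):=\Omega(u)|u|^{\rho-n}\chi_{\{|u|<t\}}(u)$. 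The difference splits into a smoothness contribution (bounded via $\Omega\in{\rm{Lip}}_\az(S^{n-1})$ combined with the constraint $\bz<\rho-n/2$) plus a cut-off discrepancy supported on the annulus $|y-z|\sim t\sim|y-x_0|$; applying Minkowski's inequality in the $L^2(dy\,dt/t^{n+2\rho+1})$-variable then yields the borderline decay
$$\mu^\rho_{\Omega,\,S}(a)(x) \ls \frac{r_0^{n+\bz}}{|x-x_0|^{n+\bz}}\|\chi_B\|_{L^\vz}^{-1}\quad\text{for all }x\in(2B)^\complement.$$

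The principal obstacle is precisely that this decay is borderline: at the critical index $p=n/(n+\bz)$ one has $p(n+\bz)=n$, so $\int_{(2B)^\complement}[\cdots]^p\,dx$ diverges logarithmically, which is why only a weak-type bound can be expected. The remedy is to work at the level-set level. Given an atomic decomposition $f=\sum_j\lz_ja_j$, for each $t\in(0,\,\fz)$ the set $\{x\in(2B_j)^\complement:\,\mu^\rho_{\Omega,\,S}(a_j)(x)>t\}$ is contained in a controlled dilate of $B_j$, whose $\vz(\cdot,\,t)$-measure is tame by Lemma \ref{bcd}(i) under $\vz\in\aa_1$. The hypothesis $I(\vz)<1$ is used to ensure that the quasi-norm on $WL^\vz$ retains sufficient $p$-type subadditivity to sum the contributions of the atoms, reproducing the required bound. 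Packaging this inside Lemma \ref{yt2} should then deliver $\|\mu^\rho_{\Omega,\,S}(f)\|_{WL^\vz}\ls\|f\|_{H^\vz}$.
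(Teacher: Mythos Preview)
Your proposal is correct and follows essentially the same route the paper intends (the paper omits the proof of Theorem~\ref{dingli.2}, declaring it analogous to those of Theorems~\ref{dingli.1} and~\ref{dingli.4}): invoke Lemma~\ref{yt2}, handle the local piece on a fixed dilate of $B$ via the weighted $L^2_{\vz(\cdot,\,t)}$-boundedness of $\mu^\rho_{\Omega,\,S}$ together with Lemma~\ref{bcd}(i), and treat the far piece via the pointwise decay and the level-set computation exactly as in the estimate of ${\rm I_2}$ in the proof of Theorem~\ref{dingli.4}. Note that the off-diagonal pointwise bound you outline is precisely Lemma~\ref{m2} (stated for $(64B)^\complement$ and multiples of $(\vz,\,\fz,\,s)$-atoms), so you need not re-derive it.
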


\begin{theorem}\label{dingli.3}
Let $\Omega\in{\rm{Lip}}_\alpha(S^{n-1})$ with $\az\in(0,\,1]$, $\rho\in(n/2,\,\fz)$, $\lz\in(2,\,\fz)$
and $\bz\in(0,\,\min\{1/2,\,\az,\,\rho-n/2,\,(\lz-2)n/3\})$.
Suppose $\vz$ is a growth function as in Definition \ref{d2.3} with $p\in(n/(n+\bz),\,1]$.
If $\vz\in\aa_{p(1+{\bz}/{n})}$,
then there exists a positive constant $C$ independent of $f$ such that
$$\lf\|\mu^{\rho,\,\ast}_{\Omega,\,\lz}(f)\r\|_{L^\vz} \leq C\|f\|_{H^\vz}.$$
\end{theorem}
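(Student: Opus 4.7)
The plan is to invoke the $H^{\vz}\to L^{\vz}$ boundedness criterion \cite[Lemma 3.12]{lll17}, as signaled in the introduction. Applied to $\mu^{\rho,\,\ast}_{\Omega,\,\lz}$, it reduces matters to two standard tasks: (i) showing $L^{q}_{\vz(\cdot,t)}$-boundedness of $\mu^{\rho,\,\ast}_{\Omega,\,\lz}$ uniformly in $t\in(0,\fz)$ for some $q$ chosen in terms of $q(\vz)$ and $i(\vz)$; and (ii) establishing a uniform integral estimate of $\mu^{\rho,\,\ast}_{\Omega,\,\lz}(a)$ on the complement of an enlargement of the supporting ball of any $(\vz,q,s)$-atom $a$.

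Task (i) follows from the classical weighted $L^{p}_{w}$-boundedness of $\mu^{\rho,\,\ast}_{\Omega,\,\lz}$ for $w\in A_{p}$ obtained by Xue-Ding \cite{xd07}, together with the hypothesis $\vz\in\aa_{p(1+\bz/n)}$, which secures a genuine $\aa_{q}$ slot for $\vz(\cdot,t)$ uniformly in $t$ upon choosing $q$ slightly greater than $q(\vz)$.

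For task (ii), the decisive device is the annular decomposition
$$\lf[\mu^{\rho,\,\ast}_{\Omega,\,\lz}(f)(x)\r]^{2}\ls\sum_{k=0}^{\fz}2^{-k\lz n}\int\int_{|x-y|<2^{k}t}|F_{t}(y)|^{2}\,\frac{dy\,dt}{t^{n+2\rho+1}},$$
where $F_{t}(y):=\int_{|y-z|<t}\Omega(y-z)|y-z|^{-(n-\rho)}f(z)\,dz$ is the same kernel appearing inside $\mu^{\rho}_{\Omega,\,S}$. Each $k$-term is the square of a truncated-cone variant of $\mu^{\rho}_{\Omega,\,S}$ with cone aperture $2^{k}$, and when applied to a $(\vz,q,s)$-atom $a$ supported in $B:=B(x_{0},r)$ and evaluated at $x$ in a dyadic shell $2^{j+1}B\setminus 2^{j}B$ with $j\ge1$, it is handled by the very ingredients used for $\mu^{\rho}_{\Omega,\,S}$ in the proof of Theorem \ref{dingli.1}: a Taylor expansion of the kernel about $z=x_{0}$, the moment-vanishing of $a$, and the $\mathrm{Lip}_{\az}$-regularity of $\Omega$ to extract an extra factor $r^{\bz}$. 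The difference from the $S$-case is that the wider cone contributes a polynomial loss of the form $2^{k\tau}$ with $\tau=\tau(n,\bz)$ arising from the enlarged region of $y$-integration and from the fact that the annular estimate must be run on balls of radius comparable to $2^{k+j}r$.

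The main obstacle is to balance this loss against the geometric factor $2^{-k\lz n/2}$ left after taking the square root, so that the $k$-series converges uniformly in $j$; a careful bookkeeping of the exponents identifies $\tau$ and gives absolute convergence precisely under the standing hypothesis $\bz<(\lz-2)n/3$, which is exactly where this restriction on $\bz$ enters. Once (ii) is proven, summing the resulting bound over the dyadic shells indexed by $j\ge1$, by means of Lemma \ref{bcd}(ii) applied to $\vz(\cdot,t)\in\aa_{p(1+\bz/n)}$ and the uniformly lower type $p>n/(n+\bz)$ of $\vz$, verifies the hypotheses of \cite[Lemma 3.12]{lll17} and delivers Theorem \ref{dingli.3}.
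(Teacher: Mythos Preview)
Your overall framework matches the paper's: invoke \cite[Lemma~3.12]{lll17}, use the uniform weighted $L^{2}_{\vz(\cdot,t)}$-boundedness of $\mu^{\rho,\,\ast}_{\Omega,\,\lz}$ from \cite{xd07} for the on-ball piece, establish a pointwise decay $\mu^{\rho,\,\ast}_{\Omega,\,\lz}(b)(x)\ls\|b\|_{L^{\fz}}\,r^{n+\bz}/|x-x_{0}|^{n+\bz}$ for $x$ off a dilate of the supporting ball, and finish with the uniformly lower type $p$ of $\vz$ together with Lemma~\ref{bcd}(ii), exactly as in the proof of Theorem~\ref{dingli.1}. Where you diverge is in how that pointwise estimate is obtained. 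The paper (Lemma~\ref{m4}) does not discretize into apertures: it first splits off the cone $\{|y-x|<t\}$, which is literally $\mu^{\rho}_{\Omega,\,S}$ and is covered by Lemma~\ref{m2}, and on the remaining region $\{|y-x|\ge t\}$ it performs a geometric decomposition $J_{1},J_{2},J_{3}$ parallel to the $I_{1},I_{2},I_{3}$ of Lemma~\ref{m2}, retaining the continuous weight $\bigl(\tfrac{t}{t+|x-y|}\bigr)^{\lz n}$ and dominating it in each piece by a tailored product of powers of $t/|y-z|$ and $t/|x-x_{0}|$; in $J_{1}$ the chosen exponent $2n+3\bz$ on the second factor is exactly what forces $\lz n\ge 2n+3\bz$, i.e.\ $\bz<(\lz-2)n/3$. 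Your aperture route is a legitimate and well-known alternative, but be aware that the polynomial loss is not purely $\tau=\tau(n,\bz)$ as you assert: the analogue of $I_{1}$ at aperture $2^{k}$ contributes a factor $2^{k(n/2+\rho)}$, which depends on $\rho$, so for large $\rho$ the $k$-series does not converge termwise and one must additionally exploit the implicit cutoff $2^{k}\ls|x-x_{0}|/r$ coming from the shell geometry before summing. The paper's direct approach has the advantage that every constraint on $\bz$ is read off from a single hand-picked exponent in a single region; your approach is more modular in that it recycles Lemma~\ref{m2} wholesale, at the cost of having to track the $k$-summability through all pieces $I_{1},I_{2},I_{31},I_{32}$ simultaneously.
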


\begin{theorem}\label{dingli.4}
Let $\Omega\in{\rm{Lip}}_\alpha(S^{n-1})$ with $\az\in(0,\,1]$, $\rho\in(n/2,\,\fz)$, $\lz\in(2,\,\fz)$
and $\bz\in(0,\,\min\{1/2,\,\az,\,\rho-n/2,\,(\lz-2)n/3\})$.
Suppose $\vz$ is a growth function as in Definition \ref{d2.3} with $p:=n/(n+\bz)$ and $I(\vz)\in(0,\,1)$,
where $I(\vz)$ is as in \eqref{e2.1.1}. If $\vz\in\aa_1$,
then there exists a positive constant $C$ independent of $f$ such that
$$\lf\|\mu^{\rho,\,\ast}_{\Omega,\,\lz}(f)\r\|_{WL^\vz} \leq C\|f\|_{H^\vz}.$$
\end{theorem}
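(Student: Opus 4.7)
The plan is to follow the template of the proof of Theorem~\ref{dingli.2}, replacing the $H^\vz\to L^\vz$ criterion of~\cite[Lemma~3.12]{lll17} by the weak-type analogue Lemma~\ref{yt2} mentioned in the introduction. By that lemma, to obtain the $H^\vz\to WL^\vz$ estimate at the critical index $p=n/(n+\bz)$ it suffices to verify, for every atom $a$ associated with a ball $B:=B(x_0,r)$ with the appropriate vanishing moments, that $\mu^{\rho,\ast}_{\Oz,\lz}$ satisfies (i)~a local $L^q$ estimate on $2B$ for some admissible $q\in(q(\vz),\fz)$, and (ii)~a molecular-type pointwise decay
\begin{align*}
\mu^{\rho,\ast}_{\Oz,\lz}(a)(x)\ls \|a\|_{L^q}\,|B|^{1/q}\,\frac{r^{n+\bz}}{|x-x_0|^{n+\bz}},\qquad x\in (2B)^\complement.
\end{align*}

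Part (i) is immediate from the $L^q$-boundedness of $\mu^{\rho,\ast}_{\Oz,\lz}$ under $\Oz\in\mathrm{Lip}_{\az}(S^{n-1})$ established by Sakamoto--Yabuta~\cite{sy99}, combined with H\"older's inequality on $2B$ to convert an $L^q$ estimate into the form required by Lemma~\ref{yt2}. For part (ii), which is the main technical input, I would dyadically decompose the outer integration according to the ratio $|y-x|/t$, writing
\begin{align*}
\mu^{\rho,\ast}_{\Oz,\lz}(a)(x)^2\ls\sum_{k=0}^{\fz}2^{-k\lz n}\iint_{|y-x|<2^k t}\lf|\int_{|y-z|<t}\frac{\Oz(y-z)}{|y-z|^{n-\rho}}a(z)\,dz\r|^2\frac{dy\,dt}{t^{n+2\rho+1}},
\end{align*}
and then split each $k$-piece further at the threshold $t\sim|x-x_0|$. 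On the small-$t$ region I would use the cancellation $\int a=0$ to replace the kernel $\Oz(y-z)|y-z|^{\rho-n}$ by its value at $z=x_0$; the $\mathrm{Lip}_{\az}$ hypothesis on $\Oz$ together with $\rho>n/2$ then produces a gain of $(r/|x-x_0|)^{\min\{\az,\rho-n/2\}}$, which interpolation with the trivial size bound upgrades to $(r/|x-x_0|)^\bz$. On the large-$t$ region one uses a direct size estimate, with the decay $|x-x_0|^{-(n+\bz)}$ coming from the cone factor $t^{-n-2\rho-1}$ and the restriction $|y-z|<t$.

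The main obstacle is the summation in $k$: each widening of the cone by $2^k$ costs a volume factor $2^{kn}$ and, after the $r^\bz|x-x_0|^{-\bz}$ gain is extracted, a further factor growing polynomially in $2^k$, whereas the prefactor supplies only $2^{-k\lz n}$. Balancing these losses against the gain is precisely what forces the quantitative restriction $\bz<(\lz-2)n/3$ in the hypothesis; under this restriction the series converges geometrically and (ii) follows. With (i) and (ii) in place, the hypotheses $\vz\in\aa_1$ (so $q(\vz)=1$), $i(\vz)\ge p=n/(n+\bz)$, and $I(\vz)<1$ make the parameter constraints of Lemma~\ref{yt2} compatible, and the desired weak estimate is obtained.
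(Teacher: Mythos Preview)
Your reduction to Lemma~\ref{yt2} is exactly the paper's framework, and the two-part scheme (local estimate near $B$, pointwise decay away from $B$) is also what the paper does. Two points of divergence are worth noting.

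\medskip
\noindent\textbf{The decay estimate.} The paper does \emph{not} run a dyadic decomposition in the aperture parameter. In Lemma~\ref{m4} it splits $\mu^{\rho,\ast}_{\Oz,\lz}$ once into the cone $\{|y-x|<t\}$---which is exactly $\mu^{\rho}_{\Oz,S}$, already handled by Lemma~\ref{m2}---and its complement $\{|y-x|\ge t\}$. On the complement it keeps the weight $\bigl(t/(t+|x-y|)\bigr)^{\lz n}$ intact and does a geometric case split according to whether $y\in 16B$ and whether $t\le|y-x_0|+8r$. The constraint $\bz<(\lz-2)n/3$ enters not from summing a geometric series in $k$ but from bounding $\bigl(t/(t+|x-y|)\bigr)^{\lz n}\le\bigl(t/|x-y|\bigr)^{2n+3\bz}$ in the piece $J_1$. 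Your dyadic route is a legitimate alternative, but be careful with your description of where cancellation is used: the replacement of $\Oz(y-z)|y-z|^{\rho-n}$ by its value at $z=x_0$ requires $B\subset\{|y-z|<t\}$, i.e.\ $t>|y-x_0|+r$, which is a threshold in $|y-x_0|$, not in $|x-x_0|$. Your ``small-$t$ versus large-$t$ at $t\sim|x-x_0|$'' does not align with that; on the narrow strip $|y-x_0|-2r\le t\le|y-x_0|+8r$ (where cancellation is unavailable) one needs the separate size argument that produces the $\bz<1/2$ restriction, and this is absent from your outline.

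\medskip
\noindent\textbf{The local part.} The paper does not use unweighted $L^q$-boundedness from \cite{sy99} plus H\"older; it invokes the \emph{weighted} $L^2_{\vz(\cdot,t)}$-boundedness of $\mu^{\rho,\ast}_{\Oz,\lz}$ (uniform in $t$) from Xue--Ding \cite{xd07}, which applies since $\vz\in\aa_1\subset\aa_2$. This gives the estimate on $64B$ directly after the uniformly upper type $1$ property, without any H\"older step.
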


\begin{remark}\label{r2.1}
\begin{enumerate}
%
%
\item[\rm{(i)}]
Let $\omega$ be a classic Muckenhoupt weight and $\phi$ an Orlicz function.

\quad(a) When $\vz(x,\,t):=\omega(x)\phi(t)$ for all $(x,\,t)\in\rn\times[0,\,\fz)$,
we have $H^\vz=H^\phi_\omega$. In this case,
Theorems \ref{dingli.1}-\ref{dingli.4} hold true for weighted Orlicz Hardy space. Even when $\varphi(x,\,t):=\phi(t)$,
the above results are also new.

\quad(b) When $\vz(x,\,t):=\omega(x)\,t^p$ for all $(x,\,t)\in\rn\times[0,\,\fz)$,
$H^\varphi$ is reduced to weighted Hardy space $H_\omega^p$. In this case,
Theorems \ref{dingli.1}-\ref{dingli.4} are new and part of these results even for Hardy space $H^p$ (namely, $\omega\equiv1$) are also new.

\item[\rm{(ii)}]
We only prove Theorems \ref{dingli.1} and \ref{dingli.4}, since the proofs of Theorems \ref{dingli.2} and \ref{dingli.3} are analogous.
\end{enumerate}
\end{remark}




\section{Proof of Theorem \ref{dingli.1}\label{s3}}

To show Theorem \ref{dingli.1}, we need some notions and auxiliary lemmas.

\begin{definition}\label{d2.11}{\rm\cite[Definition 2.4]{k14}}
Let $\vz$ be a growth function as in Definition \ref{d2.3}.
\begin{enumerate}
\item[\rm{(i)}] A triplet $(\vz,\,q,\,s)$ is said to be {\it {admissible}},
if $q\in (q(\vz),\,\fz]$ and $s \in [m(\vz),\,\fz)\cap\nn$,
where $q(\vz)$ and $m(\vz)$ are as in \eqref{e2.4} and \eqref{e2.5}, respectively.

\item[\rm{(ii)}] For an admissible triplet $(\vz,\,q,\,s)$, a measurable function $a$ is called a
{\it $(\vz,\,q,\,s)$-atom} if there exists some ball $B\subset\rn$ such that the following conditions are satisfied:

\quad(a) $a$ is supported in $B$;

\quad(b) $\|a\|_{L^q_\vz(B)}\leq\|\chi_B\|^{-1}_{L^\vz}$, where
\begin{eqnarray*}
\|a\|_{L_\vz^q(B)}:=
\lf\{\begin{array}{ll}
\dsup_{t\in(0,\,\fz)}\lf[\frac{1}{\vz(B,\,t)}\int_B|a(x)|^q \vz(x,\,t)\,dx\r]^{1/q}
                          ,\,\,\,&q\in[1,\,\fz),\\
\,\\
\|a\|_{L^\fz(B)},&q=\fz;
\end{array}\r.
\end{eqnarray*}

\quad(c) $\int_\rn a(x)x^\az dx=0$ for any $\az\in\nn^n$ with $|\az|\leq s$.

\end{enumerate}
\end{definition}

\begin{lemma}\label{m2}
Let $\Omega\in{\rm{Lip}}_\alpha(S^{n-1})$ with $\az\in(0,\,1]$, $\rho\in(n/2,\,\fz)$ and $\bz\in(0,\,\min\{1/2,\,\az,\,\rho-n/2\})$.
Suppose $b$ is a multiple of a $(\vz,\,\fz,\,s)$-atom associated with some ball $B:=B(x_0,\,r)$.
Then there exists a positive constant $C$ independent of $b$ such that, for any $x\in (64B)^\complement$,
\begin{align*}
\mu^\rho_{\Omega,\,S}(b)(x)\le C\|b\|_{L^\fz}\frac{r^{n+\beta}}{|x-x_0|^{n+\beta}}.
\end{align*}
\end{lemma}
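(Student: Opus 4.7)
The plan is to use the $0$-th order vanishing moment $\int_B b\,dz=0$ (valid since $s\ge m(\vz)\ge 0$) together with the Lip$_{\az}$-regularity of $\Omega$, exactly as in the standard molecular estimate for Marcinkiewicz-type operators. Set $R:=|x-x_0|\ge 64r$, $K(u):=\Omega(u)/|u|^{n-\rho}$ and $K_t(y,u):=K(y-u)\chi_{|y-u|<t}(u)$, and denote the inner integral in the definition of $\mu^\rho_{\Omega,\,S}(b)(x)$ by
\begin{align*}
F_t(y):=\int_B K_t(y,z)\,b(z)\,dz=\int_B\lf[K_t(y,z)-K_t(y,x_0)\r]b(z)\,dz.
\end{align*}
A direct geometric observation is that $F_t(y)=0$ unless $|y-x|<t$ and $|y-z|<t$ for some $z\in B$, which forces $t>(R-r)/2\gtrsim R$.

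Next I would decompose the bracket as
\begin{align*}
K_t(y,z)-K_t(y,x_0)=[K(y-z)-K(y-x_0)]\chi_{|y-z|<t}+K(y-x_0)\lf[\chi_{|y-z|<t}-\chi_{|y-x_0|<t}\r]
\end{align*}
and estimate the two pieces under a further split on $|y-x_0|$. For $|y-x_0|>2r$, the Lip$_{\az}$ hypothesis on $\Omega$ combined with a mean-value estimate on the radial factor $|\cdot|^{\rho-n}$ yields $|K(y-z)-K(y-x_0)|\ls r^{\bz}/|y-x_0|^{n-\rho+\bz}$ (using $\bz\le\az$ and $r/|y-x_0|\le 1/2$), while the characteristic-function difference is nonzero only on the thin shell $\bigl||y-x_0|-t\bigr|\le r$. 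For $|y-x_0|\le 2r$, the constraint $|y-x|<t$ forces $t>R/2$, and here I would use the crude size bound $|F_t(y)|\ls\|b\|_{L^\fz}r^{\rho}$ coming from $\int_B|y-z|^{\rho-n}\,dz\ls r^{\rho}$.

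Plugging these bounds into the defining square-integral and splitting the $t$-range further at $R/2$, each contribution reduces to an elementary computation. The smoothness piece in the small-$t$ range $R/3<t\le R/2$ yields $(r/R)^{2(n+\bz)}$ after using $|y-x_0|\sim R$; in the large-$t$ range $t>R/2$, polar coordinates about $x_0$ on $|y-x_0|\ls t$ give a factor $t^{2\rho-n-2\bz}$, which is integrable at infinity in $t$ precisely because $\bz<\rho-n/2$. The characteristic-function piece, via Fubini and the $t$-shell of length $\sim r$, produces $(r/R)^{n+1/2}$, absorbed into $(r/R)^{n+\bz}$ since $\bz<1/2$; and the close region gives $(r/R)^{(n+2\rho)/2}$, again absorbed since $\bz<\rho-n/2$.

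The main technical challenge is coordinating the three constraints $\bz<\az$, $\bz<1/2$ and $\bz<\rho-n/2$ built into the hypothesis on $\bz$: they correspond, respectively, to the kernel regularity estimate, the thin shell arising from the jump of the characteristic function, and the $y$-integrability in the large-$t$ regime, and all three are genuinely required for the argument to close at the target exponent $n+\bz$.
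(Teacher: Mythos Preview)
Your plan is correct and rests on the same three ingredients as the paper's proof --- the Lip$_\az$ kernel regularity, the thin $t$-shell coming from the jump in the truncation, and the crude size bound near the center --- and you correctly identify that the three constraints $\bz<\az$, $\bz<1/2$, $\bz<\rho-n/2$ govern exactly these three pieces. The organization differs: the paper first splits the cone $\{|y-x|<t\}$ geometrically into $y\in 16B$, $\{y\notin 16B,\ t\le|y-x_0|+8r\}$, and $\{y\notin 16B,\ t>|y-x_0|+8r\}$, applying the moment cancellation only in the last region (where $B\subset\{|y-z|<t\}$), and then subdivides that region according to whether $|x-x_0|\lessgtr 2|y-x_0|$. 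You instead apply the moment subtraction globally and split the \emph{kernel} difference algebraically as $[K(y-z)-K(y-x_0)]\chi_{|y-z|<t}+K(y-x_0)[\chi_{|y-z|<t}-\chi_{|y-x_0|<t}]$, and then subdivide the smooth part by the size of $t$. Your decomposition is a bit cleaner conceptually, since it isolates the smooth and jump contributions at the outset; the paper's version is slightly more hands-on but makes the geometry of each region explicit. One minor remark: the constraint you derive, $t>(R-r)/2$, already gives $t\gtrsim R/2$ (since $R\ge 64r$), so your ``small-$t$ range $R/3<t\le R/2$'' is essentially a single scale --- the argument still goes through, but you could just as well treat all $t\gtrsim R/2$ at once via the polar-coordinate computation.
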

\begin{proof}
We show this lemma by borrowing some ideas from the proof of \cite[Theorem 1]{dlx07a}.
The trick of the proof is to find a subtle segmentation.
For any $x\in (64B)^\complement$, write
\begin{align*}
\mu^\rho_{\Omega,\,S}(b)(x)
&=\lf(\int\int_{|y-x|<t}\lf|\int_{|y-z|<t}\frac{\Omega(y-z)}{|y-z|^{n-\rho}}b(z)\,dz\r|^2\,\frac{dydt}{t^{n+2\rho+1}}\r)^{\frac12} \\
&\le \lf(\int\int_{\substack{|y-x|<t \\ y\in 16B}}\lf|\int_{|y-z|<t}\frac{\Omega(y-z)}{|y-z|^{n-\rho}}b(z)\,dz\r|^2\,\frac{dydt}{t^{n+2\rho+1}}\r)^{\frac12} \\
&\hs+\lf(\int\int_{\substack{|y-x|<t \\ y\in (16B)^\complement \\ t\le|y-x_0|+8r}}\cdot\cdot\cdot\r)^{\frac12}
+\lf(\int\int_{\substack{|y-x|<t \\ y\in (16B)^\complement \\ t>|y-x_0|+8r}}\cdot\cdot\cdot\r)^{\frac12}
=:{\rm{I_1+I_2+I_3}}.
\end{align*}

For ${\rm{I_1}}$, by $x\in (64B)^\complement$, $y\in 16B$ and $z\in B$, we know that
$$t>|y-x|\ge|x-x_0|-|y-x_0|>|x-x_0|-\frac14|x-x_0|>\frac12|x-x_0| \ {\rm{and}} \ |y-z|<32r.$$
From this, $\Omega\in L^\fz(S^{n-1})$ (since $\Omega\in{\rm{Lip}}_\alpha(S^{n-1})$) and $\bz<\rho-n/2$, it follows that, for any $x\in (64B)^\complement$,
\begin{align*}
{\rm{I_1}}
&=\lf(\int\int_{\substack{|y-x|<t \\ y\in 16B}}\lf|\int_{|y-z|<t}\frac{\Omega(y-z)}{|y-z|^{n-\rho}}b(z)\,dz\r|^2\,\frac{dydt}{t^{n+2\rho+1}}\r)^{\frac12} \\
&\le \lf[\int\int_{\substack{y\in 16B \\ t>|x-x_0|/2}}\lf(\int_{|y-z|<32r}\frac{|\Omega(y-z)|}{|y-z|^{n-\rho}}|b(z)|\,dz\r)^2\,\frac{dydt}{t^{n+2\rho+1}}\r]^{\frac12} \\
&\le \|\Omega\|_{L^\fz(S^{n-1})}\|b\|_{L^\fz}\lf[\int\int_{\substack{y\in 16B \\ t>|x-x_0|/2}}\lf(\int_{|z|<32r}\frac{1}{|z|^{n-\rho}}\,dz\r)^2\,\frac{dydt}{t^{n+2\rho+1}}\r]^{\frac12} \\
&\ls \|b\|_{L^\fz}\lf(\int_{16B}1\,dy\,\int^\fz_{\frac{|x-x_0|}{2}}\frac{dt}{t^{n+2\rho+1}}\r)^{\frac12}\lf(\int_{S^{n-1}}\int^{32r}_0\frac{1}{u^{n-\rho}}u^{n-1}\,dud\sigma(z')\r) \\
&\sim \|b\|_{L^\fz}\frac{r^{\rho+n/2}}{|x-x_0|^{\rho+n/2}}\ls \|b\|_{L^\fz}\frac{r^{n+\bz}}{|x-x_0|^{n+\bz}},
\end{align*}
which is wished.

For ${\rm{I_2}}$, by $x\in (64B)^\complement$, $y\in (16B)^\complement$, $z\in B$ and the mean value theorem, we know that
\begin{align}\label{q1}
|y-z|\sim|y-x_0|;
\end{align}
\begin{align}\label{q2}
|y-x_0|-2r\le|y-x_0|-|x_0-z|\le|y-z|<t\le|y-x_0|+8r;
\end{align}
\begin{align}\label{q3}
|x-x_0|\le|x-y|+|y-x_0|\le t+|y-x_0|\le 2|y-x_0|+8r\le 3|y-x_0|;
\end{align}
\begin{align}\label{q4}
\lf|\frac{1}{(|y-x_0|-2r)^{n+2\rho}}-\frac{1}{(|y-x_0|+8r)^{n+2\rho}}\r|\ls\frac{r}{|y-x_0|^{n+2\rho+1}}.
\end{align}
From Minkowski's inequality for integrals, \eqref{q1}-\eqref{q4}, $\Omega\in L^\fz(S^{n-1})$ and $\bz<1/2$, we deduce that, for any $x\in (64B)^\complement$,
\begin{align*}
{\rm{I_2}}
&=\lf(\int\int_{\substack{|y-x|<t \\ y\in (16B)^\complement \\ t\le|y-x_0|+8r}}\lf|\int_{|y-z|<t}\frac{\Omega(y-z)}{|y-z|^{n-\rho}}b(z)\,dz\r|^2\,\frac{dydt}{t^{n+2\rho+1}}\r)^{\frac12} \\
&\le \int_B|b(z)| \lf(\int\int_{\substack{|y-x|<t \\ y\in (16B)^\complement \\ t\le|y-x_0|+8r \\ |y-z|<t}}\frac{|\Omega(y-z)|^2}{|y-z|^{2n-2\rho}}\,
\frac{dydt}{t^{n+2\rho+1}}\r)^{\frac12}\,dz \\
&\ls \int_B|b(z)| \lf[\int_{\substack{y\in (16B)^\complement \\ |x-x_0|\le3|y-x_0|}}\frac{|\Omega(y-z)|^2}{|y-x_0|^{2n-2\rho}}
\lf(\int^{|y-x_0|+8r}_{|y-x_0|-2r}\frac{dt}{t^{n+2\rho+1}}\r)\,dy\r]^{\frac12}\,dz \\
&\ls \int_B|b(z)| \lf(\int_{\substack{y\in (16B)^\complement \\ |x-x_0|\le3|y-x_0|}}\frac{|\Omega(y-z)|^2}{|y-x_0|^{2n-2\rho}}
\frac{r}{|y-x_0|^{n+2\rho+1}}\,dy\r)^{\frac12}\,dz \\
&\ls \int_B|b(z)| \lf(\int_{\substack{y\in (16B)^\complement \\ |x-x_0|\le3|y-x_0|}}\frac{1}{|y-x_0|^{n-2\bz+1}}
\frac{r}{|x-x_0|^{2n+2\bz}}\,dy\r)^{\frac12}\,dz \\
&\ls \|b\|_{L^\fz}\frac{r^{n+1/2}}{|x-x_0|^{n+\bz}}\lf(\int_{S^{n-1}}\int^\fz_r\frac{1}{u^{n-2\bz+1}}u^{n-1}\,dud\sigma(y')\r)^\frac12 \\
&\sim \|b\|_{L^\fz}\frac{r^{n+1/2}}{|x-x_0|^{n+\bz}}\,r^{\bz-1/2} \sim \|b\|_{L^\fz}\frac{r^{n+\bz}}{|x-x_0|^{n+\bz}},
\end{align*}
which is also wished.

For ${\rm{I_3}}$, noticing that $t>|y-x_0|+8r$, we see that, for any $y\in (16B)^\complement$,
\begin{align}\label{q5}
B\subset \{z\in\rn: \ |z-y|<t\}.
\end{align}
On the other hand, we claim that, for any $y\in (16B)^\complement$ and $z\in B$,
\begin{align}\label{q6}
\lf|\frac{\Omega(y-z)}{|y-z|^{n-\rho}}-\frac{\Omega(y-x_0)}{|y-x_0|^{n-\rho}}\r|\ls \frac{|z-x_0|^\az}{|y-x_0|^{n-\rho+\az}}.
\end{align}
Indeed, by the mean value theorem and $\Omega\in{\rm{Lip}}_\alpha(S^{n-1})$ with $\az\in(0,\,1]$, we obtain that, for any $y\in (16B)^\complement$ and $z\in B$,
\begin{align*}
\lf|\frac{\Omega(y-z)}{|y-z|^{n-\rho}}-\frac{\Omega(y-x_0)}{|y-x_0|^{n-\rho}}\r|
&\le\lf|\frac{\Omega(y-z)}{|y-z|^{n-\rho}}-\frac{\Omega(y-z)}{|y-x_0|^{n-\rho}}\r|+\lf|\frac{\Omega(y-z)}{|y-x_0|^{n-\rho}}-\frac{\Omega(y-x_0)}{|y-x_0|^{n-\rho}}\r| \\
&\ls\lf|\frac{1}{|y-z|^{n-\rho}}-\frac{1}{|y-x_0|^{n-\rho}}\r| \\
&\hs+\frac{1}{|y-x_0|^{n-\rho}} \lf|\Omega\lf(\frac{y-z}{|y-z|}\r)-\Omega\lf(\frac{y-x_0}{|y-x_0|}\r)\r| \\
&\ls\frac{|z-x_0|}{|y-x_0|^{n-\rho+1}}+\frac{1}{|y-x_0|^{n-\rho}} \lf|\frac{y-z}{|y-z|}-\frac{y-x_0}{|y-x_0|}\r|^\az \\
&\ls\frac{1}{|y-x_0|^{n-\rho}}\frac{|z-x_0|}{|y-x_0|}+\frac{1}{|y-x_0|^{n-\rho}}\lf(\frac{|z-x_0|}{|y-x_0|}\r)^\az \\
&\ls\frac{|z-x_0|^\az}{|y-x_0|^{n-\rho+\az}}.
\end{align*}
From \eqref{q5}, the vanishing moments of $b$, Minkowski's inequality for integrals and \eqref{q6}, it follows that, for any $x\in (64B)^\complement$,
\begin{align*}
{\rm{I_3}}
&=\lf[\int\int_{\substack{|y-x|<t \\ y\in (16B)^\complement \\ t>|y-x_0|+8r}}\lf|\int_{|y-z|<t}
\lf(\frac{\Omega(y-z)}{|y-z|^{n-\rho}}-\frac{\Omega(y-x_0)}{|y-x_0|^{n-\rho}}\r)b(z)\,dz\r|^2\frac{dydt}{t^{n+2\rho+1}}\r]^{\frac12} \\
&\le\int_B |b(z)|
\lf(\int\int_{\substack{y\in (16B)^\complement \\ t>\max\{|y-x|,\,|y-x_0|+8r,\,|y-z|\}}}
\lf|\frac{\Omega(y-z)}{|y-z|^{n-\rho}}-\frac{\Omega(y-x_0)}{|y-x_0|^{n-\rho}}\r|^2\frac{dydt}{t^{n+2\rho+1}}\r)^\frac12dz \\
&\le C\int_B |b(z)|
\lf(\int\int_{\substack{y\in (16B)^\complement \\ t>\max\{|y-x|,\,|y-x_0|+8r,\,|y-z|\}}}
\frac{|z-x_0|^{2\az}}{|y-x_0|^{2n-2\rho+2\az}}\frac{dydt}{t^{n+2\rho+1}}\r)^\frac12dz \\
&\le C\int_B |b(z)|
\lf(\int\int_{\substack{y\in (16B)^\complement \\ t>\max\{|y-x|,\,|y-x_0|+8r,\,|y-z|\} \\ |x-x_0|\le2|y-x_0|}}
\frac{|z-x_0|^{2\az}}{|y-x_0|^{2n-2\rho+2\az}}\frac{dydt}{t^{n+2\rho+1}}\r)^\frac12dz \\
&\hs+C\int_B |b(z)|
\lf(\int\int_{\substack{y\in (16B)^\complement \\ t>\max\{|y-x|,\,|y-x_0|+8r,\,|y-z|\} \\ |x-x_0|>2|y-x_0|}}
\cdot\cdot\cdot\r)^\frac12dz=:C({\rm{I_{31}}+\rm{I_{32}}}).
\end{align*}

Below, we will give the estimates of ${\rm{I_{31}}}$ and ${\rm{I_{32}}}$, respectively.

For ${\rm{I_{31}}}$, by $\bz<\az$, we know that, for any $x\in (64B)^\complement$,
\begin{align*}
{\rm{I_{31}}}
&\le\int_B |b(z)|
\lf(\int\int_{\substack{y\in (16B)^\complement \\ t>|y-x_0| \\ |x-x_0|\le2|y-x_0|}}
\frac{|z-x_0|^{2\az}}{|y-x_0|^{2n-2\rho+2\az}}\frac{dydt}{t^{n+2\rho+1}}\r)^\frac12dz \\
&\le \int_B |b(z)|
\lf[\int_{\substack{y\in (16B)^\complement \\ |x-x_0|\le2|y-x_0|}}\frac{r^{2\az}}{|y-x_0|^{2n-2\rho+2\az}}
\lf(\int^\fz_{|y-x_0|}\frac{dt}{t^{n+2\rho+1}}\r)dy\r]^\frac12dz \\
&\le \|b\|_{L^\fz}r^{n+\az} \lf(\int_{\substack{y\in (16B)^\complement \\ |x-x_0|\le2|y-x_0|}}\frac{1}{|y-x_0|^{3n+2\az}}dy\r)^\frac12 \\
&\ls \|b\|_{L^\fz}r^{n+\az} \lf(\int_{\substack{y\in (16B)^\complement \\ |x-x_0|\le2|y-x_0|}}
\frac{1}{|y-x_0|^{n-2\bz+2\az}}\frac{1}{|x-x_0|^{2n+2\bz}}dy\r)^\frac12 \\
&\ls \|b\|_{L^\fz}\frac{r^{n+\az}}{|x-x_0|^{n+\bz}} \lf(\int_{B^\complement}\frac{1}{|y-x_0|^{n-2\bz+2\az}}dy\r)^\frac12 \\
&\sim \|b\|_{L^\fz}\frac{r^{n+\az}}{|x-x_0|^{n+\bz}}\lf(\int_{S^{n-1}}\int^\fz_r\frac{1}{u^{n-2\bz+2\az}}u^{n-1}\,dud\sigma(y')\r)^\frac12 \\
&\sim \|b\|_{L^\fz}\frac{r^{n+\az}}{|x-x_0|^{n+\bz}}\,r^{\bz-\az}\sim \|b\|_{L^\fz}\frac{r^{n+\bz}}{|x-x_0|^{n+\bz}}.
\end{align*}

For ${\rm{I_{32}}}$, noticing that $t>\max\{|y-x|,\,|y-x_0|+8r,\,|y-z|\}$ and $|x-x_0|>2|y-x_0|$, we see that
$$t>|y-x|\ge|x-x_0|-|y-x_0|>\frac12|x-x_0|.$$
From this and $\bz<\min\{\az,\,\rho-n/2\}$, it follows that, for any $x\in (64B)^\complement$,
\begin{align*}
{\rm{I_{32}}}
&=\int_B |b(z)|
\lf(\int\int_{\substack{y\in (16B)^\complement \\ t>\max\{|y-x|,\,|y-x_0|+8r,\,|y-z|\} \\ |x-x_0|>2|y-x_0|}}
\frac{|z-x_0|^{2\az}}{|y-x_0|^{2n-2\rho+2\az}}\frac{dydt}{t^{n+2\rho+1}}\r)^\frac12dz \\
&\le\int_B |b(z)|
\lf(\int\int_{\substack{y\in (16B)^\complement \\ t>|y-x_0|+8r  \\ t>|x-x_0|/2}}
\frac{r^{2\az}}{|y-x_0|^{2n-2\rho+2\az}}\frac{dydt}{t^{n+2\rho+1}}\r)^\frac12dz \\
&\le \|b\|_{L^\fz}r^{n+\az}\lf[\int_{(16B)^\complement}\frac{1}{|y-x_0|^{2n-2\rho+2\az}}
\lf(\int_{\substack{ t>|y-x_0|  \\ t>|x-x_0|/2}}\frac{1}{t^{n+2\rho+1}}dt\r)dy\r]^\frac12 \\
&\ls \|b\|_{L^\fz}r^{n+\az}\lf[\int_{(16B)^\complement}\frac{1}{|y-x_0|^{2n-2\rho+2\az}}
\lf(\int_{\substack{ t>|y-x_0|  \\ t>|x-x_0|/2}}\frac{t^{2n+2\bz}}{|x-x_0|^{2n+2\bz}}\frac{1}{t^{n+2\rho+1}}dt\r)dy\r]^\frac12 \\
&\ls \|b\|_{L^\fz}\frac{r^{n+\az}}{|x-x_0|^{n+\bz}}\lf[\int_{(16B)^\complement}\frac{1}{|y-x_0|^{2n-2\rho+2\az}}
\lf(\int_{|y-x_0|}^\fz\frac{1}{t^{-n+2\rho-2\bz+1}}dt\r)dy\r]^\frac12 \\
&\ls \|b\|_{L^\fz}\frac{r^{n+\az}}{|x-x_0|^{n+\bz}}\lf(\int_{B^\complement}\frac{1}{|y-x_0|^{n+2\az-2\bz}}dy\r)^\frac12 \\
&\sim \|b\|_{L^\fz}\frac{r^{n+\az}}{|x-x_0|^{n+\bz}}\lf(\int_{S^{n-1}}\int^\fz_r\frac{1}{u^{n+2\az-2\bz}}u^{n-1}\,dud\sigma(y')\r)^\frac12 \\
&\sim \|b\|_{L^\fz}\frac{r^{n+\az}}{|x-x_0|^{n+\bz}}\,r^{\bz-\az}\sim \|b\|_{L^\fz}\frac{r^{n+\bz}}{|x-x_0|^{n+\bz}}.
\end{align*}

Combining the estimates of ${\rm{I_1}}$, ${\rm{I_2}}$, ${\rm{I_{31}}}$ and ${\rm{I_{32}}}$, we obtain the desired inequality.
This finishes the proof of Lemma \ref{m2}.
\end{proof}

%


\begin{proof}[Proof of Theorem \ref{dingli.1}]
Obviously, $\mu^\rho_{\Omega,\,S}$ is a positive sublinear operator and bounded on $L^2$.
Thus, by the boundedness criterions of operators from $H^\vz$ to $L^\vz$ (see \cite[Lemma 3.12]{lll17}),
Theorem \ref{dingli.1} will be proved by showing that $\mu^\rho_{\Oz,\,S}$ maps all multiple of a $(\vz,\,\fz,\,s)$-atoms
into uniformly bounded elements of $L^\vz$, namely,
there exists a positive constant $C$ such that,
for any $\eta\in(0,\,\fz)$ and multiple of a $(\vz,\,\fz,\,s)$-atom $b$
associated with some ball $B(x_0,\,r)\subset\rn$,
$$\int_{\rn}\vz\lf(x,\,\frac{\mu^\rho_{\Omega,\,S}(b)(x)}{\eta}\r)\,dx \le C\vz\lf(B,\,\frac{\|b\|_{L^\fz}}{\eta}\r).$$

For any $\eta\in(0,\,\fz)$, write
\begin{align*}
\int_{\rn}\vz\lf(x,\,\frac{\mu^\rho_{\Omega,\,S}(b)(x)}{\eta}\r)\,dx
=\int_{64B}\vz\lf(x,\,\frac{\mu^\rho_{\Omega,\,S}(b)(x)}{\eta}\r)\,dx+\int_{(64B)^\complement}\cdot\cdot\cdot=:{\rm{I_1+I_2}}.
\end{align*}

For ${\rm{I_1}}$, noticing that $p>n/(n+\bz)$, we see that $\vz\in\aa_2$.
From the uniformly upper type 1 property of $\vz$,
the boundedness on $L^2_{\vz(\cdot,\,t)}$, uniformly in $t\in(0,\,\fz)$, of $\mu^\rho_{\Omega,\,S}$ with $\vz\in\aa_2$
(see \cite[Theorem 1 and Remark 2]{xd07}), and Lemma \ref{bcd}(i) with $\vz\in\aa_2$,
it follows that, for any $\eta\in(0,\,\fz)$,
\begin{align*}
{\mathrm{I_1}}
&\ls \int_{64B}\lf(1+\frac{|\mu^\rho_{\Omega,\,S}(b)(x)|}{\|b\|_{L^\fz}}\r)^2\vz\lf(x,\,\frac{\|b\|_{L^\fz}}{\eta}\r)\,dx \\
&\ls \int_{64B}\lf(1+\frac{|\mu^\rho_{\Omega,\,S}(b)(x)|^2}{\|b\|^2_{L^\fz}}\r)\vz\lf(x,\,\frac{\|b\|_{L^\fz}}{\eta}\r)\,dx \\
&\ls \vz\lf(64B,\,\frac{\|b\|_{L^\fz}}{\eta}\r)+\frac{1}{\|b\|^2_{L^\fz}}\int_\rn |\mu^\rho_{\Omega,\,S}(b)(x)|^2 \vz\lf(x,\,\frac{\|b\|_{L^\fz}}{\eta}\r)\,dx \\
&\ls \vz\lf(64B,\,\frac{\|b\|_{L^\fz}}{\eta}\r)+\frac{1}{\|b\|^2_{L^\fz}}\int_{B} |b(x)|^2 \vz\lf(x,\,\frac{\|b\|_{L^\fz}}{\eta}\r)\,dx \\
&\ls \vz\lf(B,\,\frac{\|b\|_{L^\fz}}{\eta}\r).
\end{align*}

For ${\rm{I_2}}$, by Lemma \ref{m2}, the uniformly lower type $p$ property of $\vz$, and Lemma \ref{bcd}(ii) with $\vz\in\aa_{p(1+{\bz}/{n})}$,
we know that, for any $\eta\in(0,\,\fz)$,
\begin{align*}
{\mathrm{I_2}}
&\ls \int_{(64B)^\complement}\vz\lf(x,\,\frac{\|b\|_{L^\fz}}{\eta}\frac{r^{n+\bz}}{|x-x_0|^{n+\bz}}\r)\,dx \\
&\ls r^{(n+\bz)p}\int_{B^\complement}\frac{1}{|x-x_0|^{(n+\bz)p}}\vz\lf(x,\,\frac{\|b\|_{L^\fz}}{\eta}\r)\,dx\ls \vz\lf(B,\,\frac{\|b\|_{L^\fz}}{\eta}\r).
\end{align*}

To summarize what we have proved, we obtain the desired inequality.
This finishes the proof of Theorem \ref{dingli.1}.
\end{proof}

\section{Proof of Theorem \ref{dingli.4}\label{s4}}
To show Theorem \ref{dingli.4}, we first need to establish
a boundedness criterion of operators from $H^\vz$ to $WL^\vz$ (see Lemma \ref{yt2} below).

The following lemma comes from \cite[Lemma 7.13]{bckyy13}.
When $\vz(x,\,t):=t^p$ for all $(x,\,t)\in\rn\times[0,\,\fz)$ with $p\in(0,\,1)$,
it is reduced to the well-known superposition principle of weak type estimates obtained by Stein et al.,
\cite[Lemma (1.8)]{stw81} and, independently, by Kalton \cite[Theorem 6.1]{k80}.

\begin{lemma}\label{dj}
Let $\vz$ be a growth function as in Definition \ref{d2.3}
satisfying $I(\vz)\in(0,\,1)$, where $I(\vz)$ is as in \eqref{e2.1.1}.
Assume that $\{f_j\}_{j\in\zz+}$ is a sequence of measurable functions such that, for some $\eta\in(0,\,\fz)$,
$$\sum_{j\in\zz+}\sup_{\az\in(0,\,\fz)}\vz\lf(\{|f_j|>\az\},\,\frac{\az}{\eta}\r)<\fz.$$
Then there exists a positive constant $C$, depending only on $\vz$, such that, for any $\bz\in(0,\,\fz)$,
$$\vz\lf(\lf\{ \sum_{j\in\zz+}|f_j|>\bz \r\},\,\frac{\bz}{\eta}\r)
\le C\sum_{j\in\zz+}\sup_{\az\in(0,\,\fz)}\vz\lf(\{|f_j|>\az\},\,\frac{\az}{\eta}\r).$$
\end{lemma}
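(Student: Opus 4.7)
The plan is to adapt the classical Stein--Taibleson--Weiss--Kalton superposition argument for weak-type quasi-norms to the Musielak--Orlicz setting, using the uniformly upper type $q<1$ property of $\vz$ (available since $I(\vz)<1$) as the substitute for the pointwise inequality $\bigl(\sum a_j\bigr)^r\le\sum a_j^r$ that drives the classical proof. After the rescaling $f_j\mapsto f_j/\eta$ I reduce to $\eta=1$, write $A_j:=\sup_{\az>0}\vz(\{|f_j|>\az\},\az)$, and set $M:=\sum_j A_j$, which is finite by hypothesis.

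Fix $\bz>0$ and truncate each $f_j$ at level $\bz$: $f_j=g_j+h_j$ with $g_j:=f_j\chi_{\{|f_j|\le\bz\}}$ and $h_j:=f_j-g_j$. Then $\{\sum_j|f_j|>2\bz\}\subset\{\sum_j|g_j|>\bz\}\cup\{\sum_j|h_j|>\bz\}$, so it suffices to bound the $\vz(\cdot,\bz)$-measure of each piece on the right by a constant multiple of $M$; the conclusion of the lemma for arbitrary $\bz$ then follows by rescaling. The ``high'' piece is immediate: the inclusion $\{\sum_j|h_j|>\bz\}\subset\bigcup_j\{|f_j|>\bz\}$ combined with countable subadditivity of the set function $\vz(\cdot,\bz)$ and the bound $\vz(\{|f_j|>\bz\},\bz)\le A_j$ directly yields an estimate by $M$.

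The ``low'' piece is the core of the argument. Since $\vz(x,\cdot)$ is nondecreasing, a Chebyshev-type estimate gives
$$\vz\lf(\lf\{\sum_j|g_j|>\bz\r\},\bz\r)\le\int_{\rn}\vz\lf(x,\sum_j|g_j(x)|\r)\,dx,$$
so it remains to control this integral by $CM$. This would follow from two ingredients: an Aoki--Rolewicz-type quasi-subadditivity $\int\vz(x,\sum_j|g_j|)\,dx\ls\sum_j\int\vz(x,|g_j|)\,dx$, which survives infinite summation precisely because $I(\vz)<1$, so that $\vz(x,\cdot)^{1/q}$ is quasi-concave for some $q\in(I(\vz),1)$; and a layer-cake estimate $\int\vz(x,|g_j|)\,dx\ls A_j$ obtained from the uniform bound $|g_j|\le\bz$ together with the pointwise distributional bound $\vz(\{|f_j|>t\},t)\le A_j$.

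The main obstacle is the Aoki--Rolewicz step: the naive two-term quasi-subadditivity $\vz(x,a+b)\ls\vz(x,a)+\vz(x,b)$, when iterated, loses a multiplicative constant at each stage and therefore fails for infinite sums. The strict inequality $I(\vz)<1$ is precisely what allows one to promote the two-term bound to an $\ell^r$-type inequality valid for the entire series, either by a dyadic reorganization of the summands according to the size of $A_j$ or by invoking Kalton's theorem on quasi-Banach spaces; once this is in hand, summing the high and low estimates completes the proof.
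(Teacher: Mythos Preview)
The paper does not prove this lemma; it simply quotes \cite[Lemma 7.13]{bckyy13}. So there is no proof in the paper to compare against, and the question is whether your outline stands on its own.

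Your high/low splitting and the treatment of the high piece are correct. The step you flag as the ``main obstacle'' --- the infinite-sum subadditivity $\vz(x,\sum_j a_j)\le C\sum_j\vz(x,a_j)$ --- is in fact fine and needs no Aoki--Rolewicz machinery: from upper type $q<1$ one gets $\vz(x,a_j)\ge C^{-1}(a_j/a)^q\vz(x,a)$ for $a_j\le a:=\sum_k a_k$, and summing uses $\sum_j(a_j/a)^q\ge\sum_j a_j/a=1$. The genuine gap is the other ingredient, the ``layer-cake estimate'' $\int_{\rn}\vz(x,|g_j(x)|)\,dx\ls A_j$. This is false in general. Already for $\vz(x,t)=t^p$ with $p<1$ and $f_j(x)=|x|^{-n/p}\chi_{B(0,1)}$ one has $A_j\sim 1$ while $\int|g_j|^p\,dx\sim\log\bz\to\fz$ as $\bz\to\fz$. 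The weak-type information $\vz(\{|f_j|>t\},t)\le A_j$ controls each level set only at its \emph{own} height $t$; when you integrate $\vz(x,|g_j(x)|)$ the contributions from all scales $t\in(0,\bz]$ pile up with no decay, and the truncation $|g_j|\le\bz$ does not help.

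The fix that makes the Stein--Taibleson--Weiss argument go through in this setting is to introduce a second exponent $r\in(q,1)$ and to run Chebyshev with respect to the \emph{fixed} measure $d\mu_\bz:=\vz(\cdot,\bz)\,dx$, not with $\vz(x,\cdot)$ itself. Upper type $q$ gives $\mu_\bz(\{|g_j|>t\})\le C(\bz/t)^qA_j$ for $t\le\bz$, so $\int|g_j|^r\,d\mu_\bz=r\int_0^\bz t^{r-1}\mu_\bz(\{|g_j|>t\})\,dt\ls A_j\bz^r$ converges precisely because $r>q$. Then $(\sum_j|g_j|)^r\le\sum_j|g_j|^r$ (since $r<1$) and Chebyshev in $L^r(d\mu_\bz)$ yield $\vz(\{\sum_j|g_j|>\bz\},\bz)=\mu_\bz(\{\sum_j|g_j|>\bz\})\ls M$. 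The essential point is that the argument needs the gap $q<r<1$; your use of $\vz$ as the Chebyshev integrand collapses this gap and the estimate fails.
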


Using the same argument as in the proof of \cite[Lemma 4.3]{k14},
we can easily carry out the proof of the following lemma, the details being omitted.
\begin{lemma}\label{fs1}
Let $\vz$ be a growth function as in Definition \ref{d2.3}.
For a given positive constant $\tilde{C}$,
there exists a positive constant $C$ such that, for any $\eta\in(0,\,\fz)$,
$$\sup_{\alpha\in(0,\,\fz)}\vz\lf(\{|f|>\alpha\},\,\frac{\alpha}{\eta}\r)\le\tilde{C} \ implies \ that \ \|f\|_{WL^\vz}\le C \eta.$$
\end{lemma}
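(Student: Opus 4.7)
The plan is to convert the hypothesis, which is a uniform bound on $\vz(\{|f|>\alpha\},\alpha/\eta)$ in $\alpha$, into a bound on the weak Musielak-Orlicz quasi-norm by a straightforward rescaling of the second variable of $\vz$, using the uniformly lower type $p$ property of $\vz$ with some $p\in(0,\,1]$ guaranteed by Definition \ref{d2.3}(iii).

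First I would unwind the definition of $\|\cdot\|_{WL^\vz}$: it suffices to exhibit some $\eta^\ast\in(0,\,\fz)$ with
\[\sup_{t\in(0,\,\fz)}\vz\lf(\{|f|>t\},\,\frac{t}{\eta^\ast}\r)\le 1,\]
since then $\|f\|_{WL^\vz}\le \eta^\ast$. I would aim for $\eta^\ast=K\eta$ for a suitable constant $K\ge 1$ depending only on $\vz$ and $\tilde C$, so that $C:=K$ is the constant required by the lemma.

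Next, for any $K\ge 1$ and any $(x,\,t)\in\rn\times(0,\,\fz)$, I would apply the uniformly lower type $p$ estimate with $s=1/K\in(0,\,1]$ to get
\[\vz\lf(x,\,\frac{t}{K\eta}\r)=\vz\lf(x,\,\frac{1}{K}\cdot\frac{t}{\eta}\r)\le C_\vz K^{-p}\vz\lf(x,\,\frac{t}{\eta}\r).\]
Integrating this pointwise inequality over the set $\{|f|>t\}$ and invoking the hypothesis yields
\[\sup_{t\in(0,\,\fz)}\vz\lf(\{|f|>t\},\,\frac{t}{K\eta}\r)\le C_\vz K^{-p}\tilde C.\]

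Finally, I would choose $K:=\max\{1,\,(C_\vz\tilde C)^{1/p}\}$ so that the right-hand side is at most $1$; this forces $\|f\|_{WL^\vz}\le K\eta$, and $K$ depends only on $\vz$ (through $C_\vz$ and $p$) and on $\tilde C$, as required. There is no real obstacle here; the only mild subtlety is that the lower type exponent $p$ need not be attained at $i(\vz)$, but since Definition \ref{d2.3}(iii) guarantees uniformly lower type $p$ for some concrete $p\in(0,\,1]$, we may fix such a $p$ once and for all and the argument goes through. This mirrors exactly the rescaling step used in \cite[Lemma 4.3]{k14} for the strong Musielak-Orlicz norm.
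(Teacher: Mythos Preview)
Your proof is correct and follows exactly the approach the paper intends: the paper omits the details and simply points to \cite[Lemma 4.3]{k14}, whose proof is precisely the rescaling via the uniformly lower type $p$ property that you carry out here. Nothing further is needed.
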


\begin{definition}\label{ddd} \cite[p.122]{k14}
For an admissible triplet $(\vz,\,q,\,s)$,
the {\emph{Musielak-Orlicz atomic Hardy space}} $H^{\vz,\,q,\,s}_{\rm{at}}$ is defined as the space of all
$f \in \cs'$ which can be represented as a linear combination of $(\vz,\, q,\, s)$-atoms, that is,
$f =\sum_j b_j$ in $\cs'$, where $b_j$ for each $j$ is a multiple of some $(\vz,\, q,\, s)$-atom
supported in some ball ${B_j}$, with the property
\begin{align*}
\sum_{j}\vz\lf({B_j},\, \|b_j\|_{L^q_\vz({B_j})}\r)<\fz.
\end{align*}
For any given sequence of multiples of $(\vz,\,q,\,s)$-atoms, $\{b_j\}_j$, let
$$ \Lz_q(\{b_j\}_j):=\inf\lf\{\eta\in(0,\,\fz):\ \sum_j \vz\lf({B_j},\,\frac{\|b_j\|_{L^q_\vz({B_j})}}{\eta}\r)\le 1 \r\}  $$
and then the (quasi-)norm of $f\in\cs'$ is defined by
$$\|f\|_{H^{\vz,\,q,\,s}_{\rm{at}}}:=\inf\lf\{\Lz_q\lf(\{b_j\}_j\r)\r\},  $$
where the infimum is taken over all admissible decompositions of $f$ as above.
\end{definition}

\begin{lemma}\label{yzfj}{\rm{\cite[Theorem 3.1]{k14}}}
Let $(\vz,\,q,\,s)$ be an admissible triplet as in Definition \ref{d2.11}. Then
$$H^\vz=H^{\vz,\,q,\,s}_{{\rm{at}}}$$
with equivalent (quasi-)norms.
\end{lemma}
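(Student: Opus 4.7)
The plan is to establish both inclusions $H^\vz \hookrightarrow H^{\vz,\,q,\,s}_{\rm at}$ and $H^{\vz,\,q,\,s}_{\rm at} \hookrightarrow H^\vz$, following the classical Fefferman--Stein atomic decomposition scheme adapted to the Musielak--Orlicz weight setting.

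For the easy direction $H^{\vz,\,q,\,s}_{\rm at} \hookrightarrow H^\vz$, given a representation $f = \sum_j b_j$ in $\cs'$ with each $b_j$ a multiple of a $(\vz,\,q,\,s)$-atom supported in $B_j := B(x_j,\,r_j)$, I would estimate the grand maximal function by the pointwise bound $f^\ast \le \sum_j b_j^\ast$ and split each $b_j^\ast$ into its contribution on $2B_j$ and on $(2B_j)^\complement$. On the inner region one uses the $L^q_{\vz(\cdot,\,t)}$ boundedness of the grand maximal operator, which follows from $\vz\in\aa_q$ for $q>q(\vz)$ via standard Muckenhoupt theory; on the outer region one exploits the vanishing moments of $b_j$ up to order $s\ge m(\vz)$ together with a Taylor expansion of the convolution kernel to produce the decay
\[
b_j^\ast(x) \ls \|b_j\|_{L^q_\vz(B_j)}\, \lf(\frac{r_j}{|x-x_j|}\r)^{n+s+1}.
\]
Summing with the help of Lemma \ref{bcd}(ii) and the uniformly lower type $p$ property, together with the normalisation $(n+s+1)p > nq(\vz)$ built into the admissibility constraint on $s$, then yields $\|f^\ast\|_{L^\vz} \ls \Lz_q(\{b_j\}_j)$.

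For the reverse inclusion $H^\vz \hookrightarrow H^{\vz,\,q,\,s}_{\rm at}$, I would perform a Calder\'on--Zygmund decomposition of $f\in H^\vz$ at the dyadic levels $\Omega_k := \{f^\ast > 2^k\}$, $k \in \zz$. Using a Whitney cover of each open set $\Omega_k$ together with an associated smooth partition of unity, $f$ splits as $g_k + \sum_i b_{k,i}$ where $g_k$ is bounded by roughly $2^k$ and each bad piece $b_{k,i}$ is supported in a ball of controlled radius; subtracting the projection onto polynomials of degree at most $s$ enforces the vanishing moment condition. The telescoping identity $f = \sum_k (g_{k+1}-g_k)$ converges in $\cs'$ and rearranges into a sum of multiples of $(\vz,\,\fz,\,s)$-atoms, and the essential estimate for this direction is
\[
\sum_j \vz\lf(B_j,\,\frac{\|b_j\|_{L^q_\vz(B_j)}}{\eta}\r) \ls \sum_{k\in\zz} \vz\lf(\Omega_k,\,\frac{2^k}{\eta}\r) \ls \int_\rn \vz\lf(x,\,\frac{f^\ast(x)}{\eta}\r)\,dx,
\]
where the first bound uses $\vz\in\aa_q$ to pass from an $L^q$-average on $B_j$ to the pointwise level of $f^\ast$ on $\Omega_k$, and the second is a layer-cake computation; choosing $\eta$ just above $\|f^\ast\|_{L^\vz}$ closes the direction.

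The hard part will be the Calder\'on--Zygmund decomposition itself, because when $\vz$ depends nontrivially on $x$ the natural heights $2^k$ no longer align with a fixed $L^p$-norm; one must lean on the uniformly lower type $p$ of $\vz$, its uniformly upper type $1$, and the condition $\vz\in\aa_\fz$ (in particular Lemma \ref{bcd}) to transfer between level-set estimates of $f^\ast$ and the $\vz(B_j,\,\cdot)$ quantities appearing in the atomic quasi-norm. The admissibility constraints $q>q(\vz)$ and $s\ge m(\vz)$ are precisely what align the indices in these interactions, and once this bookkeeping is in place the proof follows the weighted Fefferman--Stein template of Str\"omberg and Torchinsky.
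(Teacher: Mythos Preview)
The paper does not supply its own proof of this lemma: it is quoted verbatim as \cite[Theorem 3.1]{k14} and used as a black box. There is therefore nothing in the paper to compare your argument against. Your outline is a reasonable sketch of the standard Fefferman--Stein/Str\"omberg--Torchinsky atomic decomposition adapted to the Musielak--Orlicz setting, which is indeed the strategy Ky follows in \cite{k14}; but for the purposes of this paper no proof is expected, only the citation.
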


Recall that a {\it quasi-Banach space} $\mathcal{B}$ is a linear space endowed with a quasi-norm
$\|\cdot\|_\mathcal{B}$ which is nonnegative, non-degenerate (i.e., $\|f\|_\mathcal{B}=0$
if and only if $f={\bf 0}$), homogeneous, and obeys the quasi-triangle inequality, i.e., there
exists a constant $K$ no less than 1 such that, for any $f, g\in\mathcal{B}$,
$\|f+g\|_\mathcal{B}\leq K\lf(\|f\|_\mathcal{B}+\|g\|_\mathcal{B}\r)$.

\begin{lemma}\label{ar}
Let $\mathcal{B}$ be a quasi-Banach space equipped with the quasi-norm $\|\cdot\|_\mathcal{B}$.
For any $\{f_k\}_{k\in\zz_+}\subset\mathcal{B}$ and $f\in\mathcal{B}$,
if $\mathop{\lim}\limits_{k\rightarrow\fz} \lf\|f_k-f\r\|_\mathcal{B}=0$, then
$$\frac{1}{K}\lf\|f\r\|_\mathcal{B}\le \liminf_{k\rightarrow\fz}\lf\|f_k\r\|_\mathcal{B}\le
\limsup_{k\rightarrow\fz}\lf\|f_k\r\|_\mathcal{B}\le K\lf\|f\r\|_\mathcal{B},$$
where $K$ is a constant associated with $\cb$ as above.
\end{lemma}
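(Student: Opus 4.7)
The plan is to derive both of the outer inequalities in the conclusion from two applications of the quasi-triangle inequality for $\|\cdot\|_\mathcal{B}$, and then to note that the middle inequality $\liminf\le\limsup$ is automatic. The whole argument is elementary and parallels the Banach-space proof of continuity of the norm with respect to norm convergence; the only new feature is that the constant $K$ from the quasi-triangle inequality appears on both sides, which is precisely what accounts for the asymmetric factors $\frac{1}{K}$ and $K$ in the conclusion.

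For the rightmost inequality, I will write, for each $k\in\zz_+$,
\[\|f_k\|_\mathcal{B}=\|(f_k-f)+f\|_\mathcal{B}\le K\bigl(\|f_k-f\|_\mathcal{B}+\|f\|_\mathcal{B}\bigr),\]
and then take $\limsup_{k\to\fz}$ on both sides. Since $\|f_k-f\|_\mathcal{B}\to 0$ by hypothesis, this immediately yields $\limsup_{k\to\fz}\|f_k\|_\mathcal{B}\le K\|f\|_\mathcal{B}$. For the leftmost inequality, I will apply the quasi-triangle inequality in the opposite direction,
\[\|f\|_\mathcal{B}=\|(f-f_k)+f_k\|_\mathcal{B}\le K\bigl(\|f-f_k\|_\mathcal{B}+\|f_k\|_\mathcal{B}\bigr),\]
which rearranges to $\frac{1}{K}\|f\|_\mathcal{B}-\|f-f_k\|_\mathcal{B}\le\|f_k\|_\mathcal{B}$; taking $\liminf_{k\to\fz}$ and again using $\|f_k-f\|_\mathcal{B}\to 0$, I obtain $\frac{1}{K}\|f\|_\mathcal{B}\le\liminf_{k\to\fz}\|f_k\|_\mathcal{B}$. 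Combining these with the trivial bound $\liminf_{k\to\fz}\|f_k\|_\mathcal{B}\le\limsup_{k\to\fz}\|f_k\|_\mathcal{B}$ gives the full chain of inequalities claimed by the lemma.

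There is no genuine obstacle in this argument. The only conceptual point worth flagging is that one cannot conclude $\|f_k\|_\mathcal{B}\to\|f\|_\mathcal{B}$ outright, which would be the usual Banach-space statement; when $K>1$ the best one can do is to sandwich the $\liminf$ and the $\limsup$ between $\frac{1}{K}\|f\|_\mathcal{B}$ and $K\|f\|_\mathcal{B}$, and the two applications of the quasi-triangle inequality above show that this sandwich is, in general, the sharpest conclusion available from the quasi-Banach axioms alone.
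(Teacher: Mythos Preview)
Your argument is correct and is exactly the elementary two-fold application of the quasi-triangle inequality that the paper has in mind; the paper in fact omits the proof entirely, stating only that it ``is not particularly difficult and so is omitted.'' Your closing remark about not being able to conclude $\|f_k\|_\mathcal{B}\to\|f\|_\mathcal{B}$ from the quasi-triangle inequality alone is also in line with the paper, which notes in the subsequent remark that this stronger conclusion does hold if one invokes the Aoki--Rolewicz theorem, but that the weaker sandwich suffices for the application.
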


\begin{proof}
The proof of this lemma is not particularly difficult and so is omitted.
\end{proof}

\begin{remark}\label{rr3.1}
In the process of the proof of this lemma, if we use Aoki-Rolewicz theorem (see \cite{s84}),
the result of Lemma \ref{ar} would be better.
Precisely, under the same assumptions of Lemma \ref{ar}, we can get
$\lim_{k\rightarrow\fz}\lf\|f_k\r\|_\mathcal{B}=\lf\|f\r\|_\mathcal{B}.$
However, Lemma \ref{ar} is just enough for later use.
\end{remark}

The following lemma gives a boundedness criterion of operators from $H^\vz$ to $WL^\vz$.
\begin{lemma}\label{yt2}
Let $\vz$ be a growth function as in Definition \ref{d2.3} satisfying
$I(\vz)\in(0,\,1)$, where $I(\vz)$ is as in \eqref{e2.1.1}.
Suppose that a linear or a positive sublinear operator $T$ is bounded on $L^2$.
If there exists a positive constant $C$ such that,
for any $\eta\in(0,\,\fz)$ and multiple of a $(\vz,\,q,\,s)$-atom $b$ associated with some ball $B\subset\rn$,
\begin{align}\label{z11}
 \sup_{\alpha\in(0,\,\fz)}\vz\lf(\lf\{|T(b)|>\alpha\r\},\,\frac{\alpha}{\eta}\r)
 \le C\vz\lf(B,\,\frac{\|b\|_{L^q_\vz(B)}}{\eta}\r),
\end{align}
then $T$ extends uniquely to a bounded operator from ${H^\vz}$ to $WL^\vz$.
\end{lemma}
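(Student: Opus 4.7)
The plan is to combine the atomic decomposition of $H^\vz$ with the superposition principle for weak-type estimates (Lemma \ref{dj}) and the quasi-norm recovery device (Lemma \ref{fs1}), and then use a density/continuity argument to obtain the extension.

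First, given $f \in H^\vz$, I would invoke Lemma \ref{yzfj} to write $f = \sum_j b_j$ in $\cs'$, where each $b_j$ is a multiple of a $(\vz,\,q,\,s)$-atom supported in some ball $B_j$, and
\[
\Lz_q(\{b_j\}_j) \leq 2\|f\|_{H^{\vz,\,q,\,s}_{\rm at}} \sim \|f\|_{H^\vz}.
\]
Since $T$ is bounded on $L^2$, each $T(b_j)$ is well-defined (atoms are in $L^2$). Working first with finite partial sums $f_N := \sum_{j=1}^N b_j$ (which sit in $L^2$ and in $H^\vz \cap L^2$), I have the pointwise bound $|T(f_N)| \leq \sum_{j=1}^N |T(b_j)|$ using the positive sublinearity (or equality up to sign in the linear case).

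Next, I would fix $\eta \in (\Lz_q(\{b_j\}_j),\,\fz)$ so that $\sum_j \vz(B_j,\,\|b_j\|_{L^q_\vz(B_j)}/\eta) \leq 1$. By hypothesis \eqref{z11} applied to each $b_j$,
\[
\sum_j \sup_{\alpha\in(0,\,\fz)}\vz\lf(\{|T(b_j)|>\alpha\},\,\frac{\alpha}{\eta}\r) \leq C\sum_j \vz\lf(B_j,\,\frac{\|b_j\|_{L^q_\vz(B_j)}}{\eta}\r) \leq C.
\]
Because $I(\vz)\in(0,\,1)$, Lemma \ref{dj} is applicable and yields, for any $\bz\in(0,\,\fz)$,
\[
\vz\lf(\lf\{\sum_j |T(b_j)|>\bz\r\},\,\frac{\bz}{\eta}\r) \leq C'.
\]
Consequently $|T(f_N)|$ satisfies the same uniform-in-$\beta$ bound (the sup over $\beta$ is controlled by a constant depending only on $\vz$), so Lemma \ref{fs1} delivers $\|T(f_N)\|_{WL^\vz} \leq C\eta$. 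Taking the infimum over admissible decompositions gives $\|T(f_N)\|_{WL^\vz} \leq C\|f_N\|_{H^\vz}$.

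Finally, I would pass from $f_N$ to $f$. Since $f_N \to f$ in $H^\vz$ by the atomic decomposition, the sequence $\{T(f_N)\}_{N\in\zz_+}$ is Cauchy in $WL^\vz$, and so converges to some $g \in WL^\vz$ with $\|g\|_{WL^\vz} \leq C\|f\|_{H^\vz}$ by Lemma \ref{ar} applied to the quasi-Banach space $WL^\vz$. Defining $T(f):=g$ on all of $H^\vz$ produces the required bounded extension; uniqueness follows because finite linear combinations of atoms are dense in $H^\vz$.

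The main obstacle is the last step: verifying that the definition of $T(f)$ obtained through the limit is independent of the atomic decomposition chosen and is consistent with the original $T$ on the dense subset of finite atomic combinations. This is handled by noting that the quasi-linear estimate $\|T(f_N) - T(f_M)\|_{WL^\vz} \leq C\|f_N - f_M\|_{H^\vz}$ (itself obtained by applying the same argument to the tail decomposition) makes $\{T(f_N)\}$ Cauchy, and then Lemma \ref{ar} transfers the uniform bound through the limit. The rest of the argument is then bookkeeping.
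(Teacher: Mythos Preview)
Your overall strategy (atomic decomposition, then Lemma~\ref{dj}, then Lemma~\ref{fs1}, then density) is the same as the paper's, but there is a real gap at the step where you write ``Taking the infimum over admissible decompositions gives $\|T(f_N)\|_{WL^\vz} \leq C\|f_N\|_{H^\vz}$.'' What your argument actually yields is only $\|T(f_N)\|_{WL^\vz} \le C\,\Lambda_q(\{b_j\}_{j\le N})$ for the \emph{single} finite decomposition obtained by truncating the chosen decomposition of $f$. To replace $\Lambda_q$ by $\|f_N\|_{H^\vz}$ you would need the same bound for \emph{every} admissible decomposition of $f_N$, including infinite ones converging only in $\cs'$; for those you cannot conclude $|T(f_N)| \le \sum_k |T(c_k)|$ pointwise, because $T$ is only assumed bounded on $L^2$. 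The same difficulty infects the later claim ``$\|T(f_N)-T(f_M)\|_{WL^\vz}\le C\|f_N - f_M\|_{H^\vz}$'': the tail decomposition gives $\|T(f_N-f_M)\|_{WL^\vz}\le C\,\Lambda_q(\{b_j\}_{M<j\le N})$, and $\|f_N-f_M\|_{H^\vz}$ is the \emph{infimum} of such $\Lambda_q$'s, so the inequality goes the wrong way. In particular, your proposed handling of the ``main obstacle''---independence of the chosen decomposition and consistency with the original $T$ on $H^\vz\cap L^2$---is circular: it presupposes the very estimate $\|T(g)\|_{WL^\vz}\le C\|g\|_{H^\vz}$ on finite atomic combinations that remains to be proved.

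The paper closes this gap by first restricting to $f\in H^\vz\cap L^2$ and invoking the Calder\'on reproducing formula (\cite[Theorem~2.14]{lfy15}), which produces an atomic decomposition $f=\sum_j b_j$ converging in $L^2$ \emph{as well as} in $\cs'$. The $L^2$ convergence, together with the $L^2$-boundedness of $T$ and Riesz's theorem, gives $T(f)\le\sum_j|T(b_j)|$ almost everywhere, so Lemmas~\ref{dj} and~\ref{fs1} bound $T(f)$ itself---not merely a limit of partial sums---by $C\,\Lambda_q(\{b_j\})\lesssim\|f\|_{H^\vz}$. Once the a~priori estimate $\|T(f)\|_{WL^\vz}\lesssim\|f\|_{H^\vz}$ holds on the dense subspace $H^\vz\cap L^2$, the extension via Cauchy sequences and Lemma~\ref{ar} proceeds exactly as you outline, with well-definedness now a direct consequence of that a~priori bound rather than a circular appeal to it.
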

\begin{proof}
We first assume that $f\in H^\vz\cap L^2$.
By the well known Calder\'on reproducing formula (see also \cite[Theorem 2.14]{lfy15}),
we know that there exists a sequence of multiples of $(\vz,\,q,\,s)$-atoms $\{b_j\}_{j\in\zz_+}$
associated with balls $\{B_{j}\}_{j\in\zz_+}$ such that
\begin{align}\label{a2}
f=\lim_{k\rightarrow\fz}\sum_{j=1}^k b_j
=:\lim_{k\rightarrow\fz}f_k \ {\rm{in}} \ \cs' \ {\rm{and \ also \ in}} \ L^2.
\end{align}
From the assumption that the linear or positive sublinear operator $T$
is bounded on $L^2$ and \eqref{a2}, it follows that
\begin{align*}
\lim_{k\rightarrow\fz}\lf\|T(f)-T\lf(f_k\r)\r\|_{L^2}
\le \lim_{k\rightarrow\fz}\lf\|T\lf(f-f_k\r)\r\|_{L^2}\ls \lim_{k\rightarrow\fz}\lf\|f-f_k\r\|_{L^2}\sim0,
\end{align*}
which, together with Riesz's theorem, implies that
\begin{align*}
T(f)=\lim_{k\rightarrow\fz}T\lf(f_k\r)
\le\lim_{k\rightarrow\fz}\sum_{j=1}^kT\lf(b_j\r)
=\sum_{j=1}^\fz T\lf(b_j\r) \ {\rm{almost \ everywhere}}.
\end{align*}
By this, Lemma \ref{dj} and \eqref{z11}, we obtain that, for any $\alpha\in(0,\,\fz)$,
\begin{align*}
\vz\lf(\{|T(f)|>\alpha\},\,\frac{\alpha}{\Lz_q(\{b_j\}_j)}\r)
&\le \vz\lf(\lf\{\sum_{j=1}^\fz |T\lf(b_j\r)|>\alpha\r\},\,\frac{\alpha}{\Lz_q(\{b_j\}_j)}\r) \\
&\ls \sum_{j=1}^\fz\sup_{\alpha\in(0,\,\fz)}\vz\lf(\lf\{|T\lf(b_j\r)|>\alpha\r\},\,\frac{\alpha}{\Lz_q(\{b_j\}_j)}\r)  \\
&\ls \sum_{j=1}^{\fz}\vz\lf(B_j,\,
\frac{\|b_j\|_{L^q_\vz(B_{j})}}{\Lz_q(\{b_j\}_j)}\r)\ls1,
\end{align*}
which, together with Lemma \ref{fs1}, further implies that
$$\|T(f)\|_{WL^\vz}\ls \Lz_q(\{b_j\}_j).$$
Taking infimum for all admissible decompositions of $f$ as above and using Lemma \ref{yzfj}, we obtain that, for any $f\in H^\vz\cap L^2$,
\begin{align}\label{z13}
\|T(f)\|_{WL^\vz} \ls \|f\|_{H^{\vz,\,q,\,s}_{{\rm{at}}}} \sim \|f\|_{H^\vz}.
\end{align}

Next, suppose $f\in{H^\vz}$. By the fact that $H^\vz\cap L^2$ is dense in $H^\vz$ (see {\cite[Remark 4.1.4]{ylk17}}),
we know that there exists a sequence of $\{f_j\}_{j\in\zz_+}\subset H^\vz\cap L^2$ such that
$f_j\rightarrow f$ as $j\rightarrow\fz$ in ${H^\vz}$.
Therefore, $\{f_j\}_{j\in\zz_+}$ is a Cauchy sequence in $H^\vz$.
From this and \eqref{z13}, we deduce that, for any $j,\,k\in\zz_+$,
$$\lf\|T(f_j)-T(f_k)\r\|_{{WL}^\vz} \leq \lf\|T(f_j-f_k)\r\|_{{WL}^\vz}\ls \lf\|f_j-f_k\r\|_{H^\vz}.$$
By this, we know that
$\{T(f_j)\}_{j\in\zz_+}$ is also a Cauchy sequence in ${WL}^\vz$ and hence
there exists some $g\in {WL}^\vz$ such that $T(f_j)\rightarrow g$ as $j\rightarrow\fz$ in ${WL}^\vz$.
Consequently, define $T(f):=g$. Below, we claim that $T(f)$ is well defined. Indeed,
for any other sequence $\{f'_j\}_{j\in\zz_+}\subset H^\vz\cap L^2$ satisfying
$f'_j\rightarrow f$ as $j\rightarrow\fz$ in ${H^\vz}$, by \eqref{z13},
we have
\begin{align*}
\lf\|T(f'_j)-T(f)\r\|_{WL^\vz}
&\ls \lf\|T(f'_j)-T(f_j)\r\|_{WL^\vz}+\lf\|T(f_j)-g\r\|_{WL^\vz} \\
&\ls \lf\|T(f'_j-f_j)\r\|_{WL^\vz}+\lf\|T(f_j)-g\r\|_{WL^\vz} \\
&\ls \lf\|f'_j-f_j\r\|_{H^\vz}+\lf\|T(f_j)-g\r\|_{WL^\vz} \\
&\ls \lf\|f'_j-f\r\|_{H^\vz}+\lf\|f-f_j\r\|_{H^\vz}+\lf\|T(f_j)-g\r\|_{WL^\vz}\rightarrow0 \ {\rm{as}} \ j\rightarrow\fz,
\end{align*}
which is wished. From this, Lemma \ref{ar} and \eqref{z13}, it follows that
$$\|T(f)\|_{{WL}^\vz}=\|g\|_{{WL}^\vz}\ls\liminf_{j\rightarrow\fz}\|T(f_j)\|_{{WL}^\vz}
\ls \limsup_{j\rightarrow\fz}\|f_j\|_{H^\vz}\ls\|f\|_{H^\vz}.$$
This finishes the proof of Lemma \ref{yt2}.
\end{proof}

\begin{lemma}\label{m4}
Let $\Omega\in{\rm{Lip}}_\alpha(S^{n-1})$ with $\az\in(0,\,1]$, $\rho\in(n/2,\,\fz)$, $\lz\in(2,\,\fz)$
and $\bz\in(0,\,\min\{1/2,\,\az,\,\rho-n/2,\,(\lz-2)n/3\})$.
Suppose $b$ is a multiple of a $(\vz,\,\fz,\,s)$-atom associated with some ball $B:=B(x_0,\,r)$.
Then there exists a positive constant $C$ independent of $b$ such that, for any $x\in (64B)^\complement$,
\begin{align*}
\mu^{\rho,\,\ast}_{\Omega,\,\lz}(b)(x)\le C\|b\|_{L^\fz}\frac{r^{n+\beta}}{|x-x_0|^{n+\beta}}.
\end{align*}
\end{lemma}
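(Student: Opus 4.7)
The plan is to mimic the proof of Lemma \ref{m2}, the essential adjustment being that the Lusin-cone constraint $|y-x|<t$ is replaced by the weight $(t/(t+|y-x|))^{\lz n}$ integrated over the full upper half-space. The same three-piece decomposition of the outer integration applies: (i) $y\in16B$; (ii) $y\in(16B)^\complement$ with $t\le|y-x_0|+8r$; (iii) $y\in(16B)^\complement$ with $t>|y-x_0|+8r$, where in piece (iii) the vanishing moment of $b$ and the smoothness bound \eqref{q6} are invoked. The pointwise size estimates on
$$F(y,t):=\int_{|y-z|<t}\Omega(y-z)|y-z|^{\rho-n}b(z)\,dz$$
(or on its $b$-weighted difference in piece (iii)) carry over unchanged from the proof of Lemma \ref{m2}; only the $(y,t)$-integrations need to be redone with the extra factor $(t/(t+|y-x|))^{\lz n}$.

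A convenient way to dispatch the weight is a dyadic decomposition of the aperture. Writing $\rr^{n+1}_+=E_0\cup\bigcup_{k\ge1}E_k$ with $E_0:=\{|y-x|<t\}$ and $E_k:=\{2^{k-1}t\le|y-x|<2^k t\}$ for $k\ge1$, one has $(t/(t+|y-x|))^{\lz n}\le 2^{-(k-1)\lz n}$ on $E_k$ and $E_k\subset\{|y-x|<2^k t\}$, so
$$\mu^{\rho,\,\ast}_{\Omega,\,\lz}(b)(x)^2\ls\sum_{k=0}^\fz 2^{-k\lz n}\int\int_{|y-x|<2^k t}|F(y,t)|^2\,\frac{dy\,dt}{t^{n+2\rho+1}}.$$
Each summand is a Lusin-type quantity of aperture $2^k$. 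For each $k$ I would apply the three-piece decomposition and carry out estimates exactly parallel to those for $\mathrm{I}_1,\mathrm{I}_2,\mathrm{I}_{31},\mathrm{I}_{32}$ in Lemma \ref{m2}; the wider cone only lowers the admissible threshold on $t$ and thus multiplies each piece by a computable power of $2^k$.

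Collecting and summing the geometric series in $k$ then reduces to several algebraic inequalities between the exponents of $r/|x-x_0|$; in particular, one must check that in every piece the power of $2^k$ that appears is strictly less than $\lz n$, so that the $k$-series converges and the summed constant is bounded by $r^{n+\bz}/|x-x_0|^{n+\bz}$. This is where the hypothesis $\bz<(\lz-2)n/3$ enters, via absorptions of the form $r^{\lz n}/|x-x_0|^{\lz n}\le r^{n+\bz}/|x-x_0|^{n+\bz}$ (using $r\le|x-x_0|/64$). I expect the main obstacle to be the analogue of sub-case $\mathrm{I}_{32}$ in piece (iii), where $|x-x_0|>2|y-x_0|$: there the three cancellation mechanisms---the smoothness of $\Omega$ via \eqref{q6}, the vanishing moment of $b$, and the weight in $t$---must be balanced together with the aperture factor $2^k$, and the simultaneous bookkeeping of these four ingredients is what forces the factor $3$ in the denominator of $(\lz-2)n/3$.
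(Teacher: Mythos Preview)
Your aperture decomposition is a reasonable strategy, but the claim that the pointwise estimates on $F(y,t)$ ``carry over unchanged from the proof of Lemma \ref{m2}'' fails in the piece $y\in 16B$. In Lemma \ref{m2} the bound used there is $|F(y,t)|\ls\|b\|_{L^\fz}r^\rho$, and the cone condition $|y-x|<2^k t$ together with $|y-x|>|x-x_0|/2$ only forces $t>2^{-k-1}|x-x_0|$. Hence the aperture-$2^k$ version of $\mathrm{I}_1$ satisfies
\[
(\mathrm{I}_1^{(k)})^2\ \ls\ \|b\|_{L^\fz}^2\,r^{2\rho}\int_{16B}\int_{2^{-k-1}|x-x_0|}^\fz t^{-(n+2\rho+1)}\,dt\,dy\ \sim\ \|b\|_{L^\fz}^2\,2^{k(n+2\rho)}\frac{r^{n+2\rho}}{|x-x_0|^{n+2\rho}},
\]
so the exponent of $2^k$ is $n+2\rho$, and $\sum_k 2^{-k\lz n}2^{k(n+2\rho)}$ diverges whenever $\rho\ge(\lz-1)n/2$. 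No relation of this kind between $\rho$ and $\lz$ is assumed (e.g.\ $n=2$, $\lz=3$, $\rho=10$, $\bz=1/10$ satisfies all hypotheses but gives a divergent series). The defect is repairable: replace $r^\rho$ by the sharper $|F(y,t)|\ls\|b\|_{L^\fz}\min(t,Cr)^\rho$ and split the $t$-integral at $t\sim r$; the resulting sum is then $\ls(r/|x-x_0|)^{\lz n}\le(r/|x-x_0|)^{2(n+\bz)}$ since $\bz<(\lz-2)n/2$. But this refinement is not in your proposal, and your assertion that the only obstacle lies in the analogue of $\mathrm{I}_{32}$ is mistaken.

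For comparison, the paper avoids the aperture decomposition entirely. It writes $\rr^{n+1}_+=\{|y-x|<t\}\cup\{|y-x|\ge t\}$; the first set is exactly $\mu^\rho_{\Omega,S}$ and is disposed of by Lemma \ref{m2}, while on the second set the weight is kept intact. In the analogue of your problematic piece (their $\mathrm{J}_1$: $y\in 16B$, $|y-x|\ge t$) the two-sided control $|y-z|<t\le|y-x|\sim|x-x_0|$ permits the insertion
\[
\lf(\frac{t}{t+|x-y|}\r)^{\lz n}\ \ls\ \lf(\frac{t}{|y-z|}\r)^{2\rho-n-2\bz}\lf(\frac{t}{|x-x_0|}\r)^{2n+3\bz},
\]
valid precisely because $\bz<(\lz-2)n/3$, which makes the $t$-integral $\int_0^{|y-x|}t^{\bz-1}\,dt$ finite with no constraint linking $\rho$ and $\lz$. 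Your enlargement to the full cone $|y-x|<2^k t$ discards the upper bound $t\le|y-x|$, and that is exactly what breaks the $\mathrm{I}_1$ step. Note also that in the paper the factor $3$ in $(\lz-2)n/3$ arises here in $\mathrm{J}_1$, not in the $\mathrm{I}_{32}$-type piece as you conjectured.
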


\begin{proof}
We show this lemma by borrowing some ideas from the proof of \cite[Theorem 1.1]{dlx07lp}.
By Lemma \ref{m2}, we know that, for any $x\in (64B)^\complement$,
\begin{align*}
\mu^{\rho,\,\ast}_{\Omega,\,\lz}(b)(x)
&=\lf[\int\int_{{\mathbb{R}}^{n+1}_+}\lf(\frac{t}{t+|x-y|}\r)^{\lz n}\lf|\int_{|y-z|<t}\frac{\Omega(y-z)}{|y-z|^{n-\rho}}b(z)\,dz\r|^2\,\frac{dydt}{t^{n+2\rho+1}}\r]^{\frac12} \\
&\le\lf[\int\int_{|y-x|<t}\lf(\frac{t}{t+|x-y|}\r)^{\lz n}\lf|\int_{|y-z|<t}\frac{\Omega(y-z)}{|y-z|^{n-\rho}}b(z)\,dz\r|^2\,\frac{dydt}{t^{n+2\rho+1}}\r]^{\frac12} \\
&\hs+\lf[\int\int_{|y-x|\ge t}\lf(\frac{t}{t+|x-y|}\r)^{\lz n}\lf|\int_{|y-z|<t}\frac{\Omega(y-z)}{|y-z|^{n-\rho}}b(z)\,dz\r|^2\,\frac{dydt}{t^{n+2\rho+1}}\r]^{\frac12} \\
&\le\mu^\rho_{\Omega,\,S}(b)(x) \\
&\hs+\lf[\int\int_{|y-x|\ge t}\lf(\frac{t}{t+|x-y|}\r)^{\lz n}\lf|\int_{|y-z|<t}\frac{\Omega(y-z)}{|y-z|^{n-\rho}}b(z)\,dz\r|^2\,\frac{dydt}{t^{n+2\rho+1}}\r]^{\frac12} \\
&\le C\|b\|_{L^\fz}\frac{r^{n+\beta}}{|x-x_0|^{n+\beta}} \\
&\hs+\lf[\int\int_{|y-x|\ge t}\lf(\frac{t}{t+|x-y|}\r)^{\lz n}\lf|\int_{|y-z|<t}\frac{\Omega(y-z)}{|y-z|^{n-\rho}}b(z)\,dz\r|^2\,\frac{dydt}{t^{n+2\rho+1}}\r]^{\frac12} \\
&=:C\|b\|_{L^\fz}\frac{r^{n+\beta}}{|x-x_0|^{n+\beta}}+{\rm{J}}.
\end{align*}
Thus, to show Lemma \ref{m4}, it suffices to prove that, for any $x\in (64B)^\complement$,
$${\rm{J}}\ls \|b\|_{L^\fz}\frac{r^{n+\beta}}{|x-x_0|^{n+\beta}}.$$

For any $x\in (64B)^\complement$, write
\begin{align*}
{\rm{J}}
&\le \lf[\int\int_{\substack{|y-x|\ge t \\ y\in 16B}}
\lf(\frac{t}{t+|x-y|}\r)^{\lz n}\lf|\int_{|y-z|<t}\frac{\Omega(y-z)}{|y-z|^{n-\rho}}b(z)\,dz\r|^2\,\frac{dydt}{t^{n+2\rho+1}}\r]^{\frac12} \\
&\hs+\lf[\int\int_{\substack{|y-x|\ge t \\ y\in (16B)^\complement \\ t\le|y-x_0|+8r}}\cdot\cdot\cdot\r]^{\frac12}
+\lf[\int\int_{\substack{|y-x|\ge t \\ y\in (16B)^\complement \\ t>|y-x_0|+8r}}\cdot\cdot\cdot\r]^{\frac12}=:{\rm{J_1+J_2+J_3}}.
\end{align*}

For ${\rm{J_1}}$, by $x\in (64B)^\complement$, $y\in 16B$ and $z\in B$, we know that
\begin{align*}
|y-x|\ge|x-x_0|-|y-x_0|>|x-x_0|-\frac14|x-x_0|>\frac12|x-x_0| \ {\rm{and}} \ |y-z|<32r.
\end{align*}

From this, Minkowski's inequality for integrals, $\bz<\min\{(\lz-2)n/3,\,\rho-n/2\}$,
$\Omega\in L^\fz(S^{n-1})$, and $|y-x|\sim|x-x_0|$ with $x\in (64B)^\complement$ and $y\in 16B$,
it follows that, for any $x\in (64B)^\complement$,
\begin{align*}
{\rm{J_1}}
&\le\lf[\int\int_{\substack{|y-x|\ge t \\ y\in 16B \\ |y-x|>|x-x_0|/2}}\lf(\frac{t}{t+|x-y|}\r)^{\lz n}
\lf|\int_{\substack{|y-z|<t \\ |y-z|<32r}}\frac{\Omega(y-z)}{|y-z|^{n-\rho}}b(z)\,dz\r|^2\,\frac{dydt}{t^{n+2\rho+1}}\r]^{\frac12} \\
&\le \int_B|b(z)|\lf[\int\int_{\substack{|y-x|\ge t \\ y\in 16B \\ |y-x|>|x-x_0|/2 \\ |y-z|<32r,\,|y-z|<t}}
\lf(\frac{t}{t+|x-y|}\r)^{\lz n}\frac{|\Omega(y-z)|^2}{|y-z|^{2n-2\rho}}\frac{dydt}{t^{n+2\rho+1}}\r]^{\frac12}dz \\
&\ls \int_B|b(z)|\lf[\int\int_{\substack{y\in 16B \\ |y-x|\ge t \\ |y-x|>|x-x_0|/2 \\ |y-z|<32r,\,|y-z|<t}}\lf(\frac{t}{|y-z|}\r)^{2\rho-n-2\bz}\r. \\
&\lf. \hspace{5.8 cm} \times \lf(\frac{t}{|x-x_0|}\r)^{2n+3\bz}\frac{|\Omega(y-z)|^2}{|y-z|^{2n-2\rho}}\frac{dydt}{t^{n+2\rho+1}}\r]^{\frac12}dz \\
&\ls \int_B|b(z)|\lf[\int_{|y-z|<32r}\frac{1}{|x-x_0|^{2n+3\bz}|y-z|^{n-2\bz}}\lf(\int_0^{|y-x|}t^{\bz-1}dt\r)dy\r]^{\frac12}dz \\
&\sim \int_B|b(z)|\lf(\int_{|y-z|<32r}\frac{|y-x|^\bz}{|x-x_0|^{2n+3\bz}|y-z|^{n-2\bz}}dy\r)^{\frac12}dz \\
&\sim \frac{1}{|x-x_0|^{n+\bz}}\int_B|b(z)|\lf(\int_{|y|<32r}\frac{1}{|y|^{n-2\bz}}dy\r)^{\frac12}dz \\
&\ls \|b\|_{L^\fz}\frac{r^n}{|x-x_0|^{n+\bz}}\lf(\int_{S^{n-1}}\int^{32r}_0\frac{1}{u^{n-2\bz}}u^{n-1}\,dud\sigma(y')\r)^\frac12 \\
&\sim \|b\|_{L^\fz}\frac{r^{n+\bz}}{|x-x_0|^{n+\bz}},
\end{align*}
which is wished.

For ${\rm{J_2}}$, write
\begin{align*}
{\rm{J_2}}
&\le\lf[\int\int_{\substack{|y-x|\ge t \\ y\in (16B)^\complement \\ t\le|y-x_0|+8r \\ |x-x_0|>3|y-x_0|}}
\lf(\frac{t}{t+|x-y|}\r)^{\lz n}\lf|\int_{|y-z|<t}\frac{\Omega(y-z)}{|y-z|^{n-\rho}}b(z)\,dz\r|^2\,\frac{dydt}{t^{n+2\rho+1}}\r]^{\frac12} \\
&\hs+\lf[\int\int_{\substack{|y-x|\ge t \\ y\in (16B)^\complement \\ t\le|y-x_0|+8r \\ |x-x_0|\le3|y-x_0|}}
\lf|\int_{|y-z|<t}\frac{\Omega(y-z)}{|y-z|^{n-\rho}}b(z)\,dz\r|^2\,\frac{dydt}{t^{n+2\rho+1}}\r]^{\frac12}=:{\rm{J_{21}+J_{22}}}.
\end{align*}

The estimate of ${\rm{J_{22}}}$ is quite similar to that given earlier for the estimate of ${\rm{I_{2}}}$ in Lemma \ref{m2} and so is omitted.
We are now turning to the estimate of ${\rm{J_{21}}}$.

For ${\rm{J_{21}}}$, by $x\in (64B)^\complement$, $y\in (16B)^\complement$, $z\in B$, $|x-x_0|>3|y-x_0|$ and the mean value theorem, we know that
\begin{align}\label{p1}
|y-z|\sim|y-x_0|;
\end{align}
\begin{align}\label{p2}
|y-x_0|-2r\le|y-x_0|-|x_0-z|\le|y-z|<t\le|y-x_0|+8r;
\end{align}
\begin{align}\label{p4}
|x-y|\ge|x-x_0|-|y-x_0|> \frac12|x-x_0|;
\end{align}
\begin{align}\label{p5}
\lf|\frac{1}{(|y-x_0|-2r)^{2\rho-n-2\bz}}-\frac{1}{(|y-x_0|+8r)^{2\rho-n-2\bz}}\r|\ls\frac{r}{|y-x_0|^{2\rho-n-2\bz+1}}.
\end{align}

From Minkowski's inequality for integrals, $\bz<\min\{(\lz-2)n/3,\,1/2\}<(\lz-2)n/2$, \eqref{p1}-\eqref{p5} and $\Omega\in L^\fz(S^{n-1})$, it follows that, for any $x\in (64B)^\complement$,
\begin{align*}
{\rm{J_{21}}}
&=\lf[\int\int_{\substack{|y-x|\ge t \\ y\in (16B)^\complement \\ t\le|y-x_0|+8r \\ |x-x_0|>3|y-x_0|}}
\lf(\frac{t}{t+|x-y|}\r)^{\lz n}\lf|\int_{|y-z|<t}\frac{\Omega(y-z)}{|y-z|^{n-\rho}}b(z)\,dz\r|^2\,\frac{dydt}{t^{n+2\rho+1}}\r]^{\frac12} \\
&\le \int_B|b(z)|\lf[\int\int_{\substack{|y-x|\ge t \\ y\in (16B)^\complement \\ t\le|y-x_0|+8r \\ |x-x_0|>3|y-x_0| \\ |y-z|<t }}
\lf(\frac{t}{t+|x-y|}\r)^{2n+2\bz}\frac{|\Omega(y-z)|^2}{|y-z|^{2n-2\rho}}\frac{dydt}{t^{n+2\rho+1}}\r]^\frac12dz \\
&\ls \int_B|b(z)|\lf[\int\int_{\substack{ y\in (16B)^\complement \\ |x-y|>|x-x_0|/2 \\ |y-x_0|-2r\le t\le|y-x_0|+8r }}
\lf(\frac{t}{|x-y|}\r)^{2n+2\bz}\frac{|\Omega(y-z)|^2}{|y-x_0|^{2n-2\rho}}\frac{dydt}{t^{n+2\rho+1}}\r]^\frac12dz \\
&\ls \frac{1}{|x-x_0|^{n+\bz}}\int_B|b(z)|\lf[\int_{\substack{ y\in (16B)^\complement}}
\frac{|\Omega(y-z)|^2}{|y-x_0|^{2n-2\rho}}\lf(\int^{|y-x_0|+8r}_{|y-x_0|-2r}\frac{dt}{t^{2\rho-n-2\bz+1}}\r)dy\r]^\frac12dz \\
&\ls \|b\|_{L^\fz}\frac{r^{n}}{|x-x_0|^{n+\bz}}
\lf(\int_{B^\complement}\frac{1}{|y-x_0|^{2n-2\rho}}\frac{r}{|y-x_0|^{2\rho-n-2\bz+1}}dy\r)^\frac12 \\
&\sim \|b\|_{L^\fz}\frac{r^{n+1/2}}{|x-x_0|^{n+\bz}}\lf(\int_{S^{n-1}}\int^\fz_r\frac{1}{u^{n-2\bz+1}}u^{n-1}\,dud\sigma(y')\r)^\frac12 \\
&\sim \|b\|_{L^\fz}\frac{r^{n+1/2}}{|x-x_0|^{n+\bz}}\,r^{\bz-1/2}\sim \|b\|_{L^\fz}\frac{r^{n+\bz}}{|x-x_0|^{n+\bz}},
\end{align*}
which is also wished.


For ${\rm{J_3}}$, noticing that $t>|y-x_0|+8r$, we see that, for any $y\in (16B)^\complement$,
\begin{align}\label{p6}
B\subset \{z\in\rn: \ |z-y|<t\}
\end{align}
and
\begin{align}\label{p7}
t+|x-y|\ge t+|x-x_0|-|y-x_0|\ge |x-x_0|+8r>|x-x_0|.
\end{align}
From \eqref{p6}, the vanishing moments of $b$, Minkowski's inequality for integrals, \eqref{p7},
\eqref{q6} and $\bz<\min\{\az,\,\rho-n/2,\,(\lz-2)n/3\}<(\lz-2)n/2$,
it follows that, for any $x\in (64B)^\complement$,
\begin{align*}
{\rm{J_{3}}}
&= \lf[\int\int_{\substack{|y-x|\ge t \\ y\in (16B)^\complement \\ t>|y-x_0|+8r}}
\lf(\frac{t}{t+|x-y|}\r)^{\lz n}\lf|\int_{|y-z|<t}\frac{\Omega(y-z)}{|y-z|^{n-\rho}}b(z)\,dz\r|^2\,\frac{dydt}{t^{n+2\rho+1}}\r]^{\frac12} \\
&\le\int_B |b(z)|
\lf[\int\int_{\substack{|y-x|\ge t \\ y\in (16B)^\complement \\ t>|y-x_0|+8r \\ |y-z|<t \\ t+|x-y|>|x-x_0|}}
\lf(\frac{t}{t+|x-y|}\r)^{\lz n}
\lf|\frac{\Omega(y-z)}{|y-z|^{n-\rho}}-\frac{\Omega(y-x_0)}{|y-x_0|^{n-\rho}}\r|^2\frac{dydt}{t^{n+2\rho+1}}\r]^\frac12dz \\
&\ls\int_B |b(z)|
\lf[\int\int_{\substack{|y-x|\ge t \\ y\in (16B)^\complement \\ t>|y-x_0|+8r \\ |y-z|<t \\ t+|x-y|>|x-x_0|}}
\lf(\frac{t+|x-y|}{|x-x_0|}\r)^{2n+2\bz}\lf(\frac{t}{t+|x-y|}\r)^{\lz n} \r. \\
&\lf. \hspace{5.4 cm}\times\frac{|z-x_0|^{2\az}}{|y-x_0|^{2n-2\rho+2\az}}\frac{dydt}{t^{n+2\rho+1}}\r]^\frac12dz \\
&\ls\frac{r^\az}{|x-x_0|^{n+\bz}}\int_B |b(z)|
\lf[\int\int_{\substack{|y-x|\ge t \\ y\in (16B)^\complement \\ t>|y-x_0|+8r \\ |y-z|<t \\ t+|x-y|>|x-x_0|}}
\frac{t^{\lz n}}{(t+|x-y|)^{\lz n-2n-2\bz}} \r.\\
&\lf. \hspace{7.3 cm} \times\frac{1}{|y-x_0|^{2n-2\rho+2\az}}\frac{dydt}{t^{n+2\rho+1}}\r]^\frac12dz \\
&\ls\frac{r^\az}{|x-x_0|^{n+\bz}}\int_B |b(z)|
\lf(\int\int_{\substack{ y\in B^\complement \\ t>|y-x_0|}}
\frac{1}{|y-x_0|^{2n-2\rho+2\az}}\frac{dydt}{t^{-n+2\rho-2\bz+1}}\r)^\frac12dz \\
&\ls\|b\|_{L^\fz}\frac{r^{n+\az}}{|x-x_0|^{n+\bz}}
\lf[\int_{B^\complement}\frac{1}{|y-x_0|^{2n-2\rho+2\az}}\lf(\int_{|y-x_0|}^\fz
\frac{dt}{t^{-n+2\rho-2\bz+1}}\r)dy\r]^\frac12  \\
&\sim\|b\|_{L^\fz}\frac{r^{n+\az}}{|x-x_0|^{n+\bz}}
\lf(\int_{B^\complement}\frac{1}{|y-x_0|^{n+2\az-2\bz}}dy\r)^\frac12 \\
&\sim \|b\|_{L^\fz}\frac{r^{n+\az}}{|x-x_0|^{n+\bz}}\lf(\int_{S^{n-1}}\int^\fz_r\frac{1}{s^{n+2\az-2\bz}}s^{n-1}\,dsd\sigma(y')\r)^\frac12 \\
&\sim \|b\|_{L^\fz}\frac{r^{n+\az}}{|x-x_0|^{n+\bz}}\,r^{\bz-\az}\sim\|b\|_{L^\fz}\frac{r^{n+\bz}}{|x-x_0|^{n+\bz}}.
\end{align*}

Combining the estimates of ${\rm{J_1}}$, ${\rm{J_{2}}}$ and ${\rm{J_3}}$, we obtain the desired inequality.
This finishes the proof of Lemma \ref{m4}.
\end{proof}

%

\begin{proof}[Proof of Theorem \ref{dingli.4}]
Obviously, $\mu^{\rho,\,\ast}_{\Omega,\,\lz}$ is a positive sublinear operator and bounded on $L^2$.
Thus, by the boundedness criterions of operators from $H^\vz$ to $WL^\vz$ (see Lemma \ref{yt2}),
Theorem \ref{dingli.4} will be proved by showing that $\mu^{\rho,\,\ast}_{\Omega,\,\lz}$
maps all multiple of $(\vz,\,\fz,\,s)$-atoms into uniformly bounded elements of $WL^\vz$, namely,
there exists a positive constant $C$ such that,
for any $\eta\in(0,\,\fz)$ and multiple of a $(\vz,\,\fz,\,s)$-atom $b$
associated with some ball $B(x_0,\,r)\subset\rn$,
$$\sup_{\alpha\in(0,\,\fz)}\vz\lf(\lf\{\mu^{\rho,\,\ast}_{\Omega,\,\lz}(b)>\alpha\r\},\,\frac{\alpha}{\eta}\r)
\le C\vz\lf(B,\,\frac{\|b\|_{L^\fz}}{\eta}\r).$$

For any $\eta\in(0,\,\fz)$, write
\begin{align*}
&\sup_{\alpha\in(0,\,\fz)}\vz\lf(\lf\{\mu^{\rho,\,\ast}_{\Omega,\,\lz}(b)>\alpha\r\},\,\frac{\alpha}{\eta}\r) \\
&\hs\le \sup_{\alpha\in(0,\,\fz)}\vz\lf(\lf\{x\in 64B: \ \mu^{\rho,\,\ast}_{\Omega,\,\lz}(b)(x)>\alpha\r\},\,\frac{\alpha}{\eta}\r) \\
&\hs\hs+\sup_{\alpha\in(0,\,\fz)}\vz\lf(\lf\{x\in (64B)^\complement: \ \mu^{\rho,\,\ast}_{\Omega,\,\lz}(b)(x)>\alpha\r\},\,\frac{\alpha}{\eta}\r)
=:{\rm{I_1}}+{\rm{I_2}}.
\end{align*}

For ${\rm{I_1}}$, by the uniformly upper type 1 property of $\vz$,
the boundedness on $L^2_{\vz(\cdot,\,t)}$, uniformly in $t\in(0,\,\fz)$, of $\mu^{\rho,\,\ast}_{\Omega,\,\lz}$ with $\vz\in\aa_2$
(see \cite[Theorem 1 and Remark 2]{xd07}),
and Lemma \ref{bcd}(i) with $\vz\in\aa_2$, it follows that, for any $\eta\in(0,\,\fz)$,
\begin{align*}
{\rm{I_1}}
&=\sup_{\alpha\in(0,\,\fz)}\int_{\lf\{x\in 64B: \ \mu^{\rho,\,\ast}_{\Omega,\,\lz}(b)(x)>\alpha\r\}}\vz\lf(x,\,\frac{\alpha}{\eta}\r)\,dx \\
&\le \int_{64B}\vz\lf(x,\,\frac{\mu^{\rho,\,\ast}_{\Omega,\,\lz}(b)(x)}{\eta}\r)\,dx \\
&\ls \int_{64B}\lf(1+\frac{|\mu^{\rho,\,\ast}_{\Omega,\,\lz}(b)(x)|}{\|b\|_{L^\fz}}\r)^2\vz\lf(x,\,\frac{\|b\|_{L^\fz}}{\eta}\r)\,dx \\
&\ls \int_{64B}\lf(1+\frac{|\mu^{\rho,\,\ast}_{\Omega,\,\lz}(b)(x)|^2}{\|b\|^2_{L^\fz}}\r)\vz\lf(x,\,\frac{\|b\|_{L^\fz}}{\eta}\r)\,dx \\
&\ls \vz\lf(64B,\,\frac{\|b\|_{L^\fz}}{\eta}\r)+\frac{1}{\|b\|^2_{L^\fz}}\int_\rn |\mu^{\rho,\,\ast}_{\Omega,\,\lz}(b)(x)|^2 \vz\lf(x,\,\frac{\|b\|_{L^\fz}}{\eta}\r)\,dx \\
&\ls \vz\lf(64B,\,\frac{\|b\|_{L^\fz}}{\eta}\r)+\frac{1}{\|b\|^2_{L^\fz}}\int_{B} |b(x)|^2 \vz\lf(x,\,\frac{\|b\|_{L^\fz}}{\eta}\r)\,dx \\
&\ls \vz\lf(B,\,\frac{\|b\|_{L^\fz}}{\eta}\r).
\end{align*}

For ${\rm{I_2}}$, from Lemma \ref{m4}, Lemma \ref{bcd}(i) with $\vz\in\aa_1$,
and the uniformly lower type $p$ property of $\vz$, we deduce that, for any $\eta\in(0,\,\fz)$,
\begin{align*}
{\rm{I_2}}
&\ls\sup_{\alpha\in(0,\,\fz)}\vz\lf(\lf\{x\in (64B)^\complement: \
\|b\|_{L^\fz}\frac{r^{n+\bz}}{|x-x_0|^{n+\bz}}>\alpha\r\},\,\frac{\alpha}{\eta}\r) \\
&\ls\sup_{\alpha\in(0,\,\fz)}\vz\lf(\lf\{x\in B^\complement: \
|x-x_0|^{n+\bz}<\frac{\|b\|_{L^\fz}}{\alpha}r^{n+\bz}\r\},\,\frac{\alpha}{\eta}\r) \\
&\sim\sup_{\alpha\in(0,\,\fz)}\vz\lf(\lf\{x\in \rn: \
r\le|x-x_0|<\lf(\frac{\|b\|_{L^\fz}}{\alpha}\r)^{\frac{1}{n+\bz}} r \r\},\,\frac{\alpha}{\eta}\r) \\
&\ls\sup_{\alpha\in(0,\,\|b\|_{L^\fz})}\vz\lf(\lf\{x\in \rn: \
|x-x_0|<\lf(\frac{\|b\|_{L^\fz}}{\alpha}\r)^{\frac{1}{n+\bz}} r \r\},\,\frac{\alpha}{\eta}\r) \\
&\thicksim\sup_{\alpha\in(0,\,\|b\|_{L^\fz})}
\vz\lf(\lf[\frac{\|b\|_{L^\fz}}{\alpha}\r]^{\frac{1}{n+\bz}} B ,\,\frac{\alpha}{\eta}\r) \\
&\ls\sup_{\alpha\in(0,\,\|b\|_{L^\fz})} \lf(\frac{\|b\|_{L^\fz}}{\alpha}\r)^p
\vz\lf(B ,\,\frac{\alpha}{\eta}\r) \\
&\ls\sup_{\alpha\in(0,\,\|b\|_{L^\fz})} \lf(\frac{\|b\|_{L^\fz}}{\alpha}\r)^p
\lf(\frac{\alpha}{\|b\|_{L^\fz}}\r)^p \vz\lf(B,\,\frac{\|b\|_{L^\fz}}{\eta}\r) \\
&\thicksim \vz\lf(B,\,\frac{\|b\|_{L^\fz}}{\eta}\r).
\end{align*}

To summarize what we have proved, we obtain the desired inequality.
This finishes the proof of Theorem \ref{dingli.4}.
\end{proof}

\noindent {\bf Acknowledgements}

\medskip

\noindent
The author is greatly indebted to Associate Professor Li Baode for many useful discussions and for the guidance over the past years.

\medskip

\noindent Li Bo

\medskip
%
%

\noindent{E-mail }:
\texttt{bli.math@outlook.com}   \\

\bigskip

\end{document}